\newtheorem{theorem}{Theorem}[section]
\newtheorem*{algorithm*}{Algorithm}
\newtheorem{lemma}[theorem]{Lemma}
\newtheorem{proposition}[theorem]{Proposition}
\newtheorem{propositiondef}[theorem]{Proposition-Definition}
\newtheorem*{lemma*}{Lemma}
\newtheorem{corollary}[theorem]{Corollary}
\newtheorem*{maintheorem*}{Main Theorem}
\newtheorem*{npalgorithm*}{The Newton-Puiseux Algorithm}
\theoremstyle{definition}
\newtheorem{definition}[theorem]{Definition}
\newtheorem{example}[theorem]{Example}
\newtheorem{question}[theorem]{Question}
\theoremstyle{remark} 
\newtheorem{remark}[theorem]{Remark}
\newtheorem{notation}[theorem]{Notation}
\newcommand{\so}{\mathcal{O}}
\newcommand{\fQ}{\mathbb{Q}}
\newcommand{\fR}{\mathbb{R}}
\newcommand{\fC}{\mathbb{C}}
\newcommand{\rN}{\mathbb{N}}
\newcommand{\rZ}{\mathbb{Z}}
\begin{document}

\title[ multiplier ideals of analytically irreducible plane curves]{ multiplier ideals of analytically irreducible plane curves}

\author{Mingyi Zhang}
\address{Department of Mathematics, Northwestern University, 2033 Sheridan Road, Evanston, IL 60208, USA}
\email{mingyi@math.northwestern.edu}
\subjclass[2010]{14F18, 14H50, 14H20}

\begin{abstract}
Let $S$ be a Puiseux series of the germ of an analytically irreducible plane curve $Z$.  
We provide a new perspective to construct a set of polynomials $F=\{F_1,\ldots, F_{g-1}\}$ associated to $S$, which is a special choice of maximal contact elements constructed in \cite{AAB17} and approximate roots defined in \cite{Dur18}, \cite{AM73a}, \cite{AM73b}. Using these polynomials as building blocks, we describe 
a set of  generators of multiplier ideals of 
the form $\mathfrak{I}(\alpha Z)$ with  $0<\alpha<1$ a rational number, which recovers the results about irreducible plane curves in \cite{AAB17}, \cite{Dur18}.
\end{abstract}
\maketitle
 \section{Introduction}
 Let $X$ be a smooth complex variety and $D$ be an effective $\fQ$-divisor on $X$. The multiplier ideal associated to $D$ is defined as 
\begin{align}\label{E:multiideal}
    \mathfrak{I}(D)=\varphi_*\so_Y(K_{Y/X}-[\varphi^*D])
\end{align}
 where $\varphi:Y\to X$ is a log resolution of $(X,D)$ (see \cite[Definition 9.2.1]{Laz04}).
Multiplier ideals and their vanishing theorems are very useful in many areas, but an explicit formula for multiplier ideals is hard to give 
except in several special cases, see for example \cite{Bli04}, \cite{How11}, \cite{Mus06}, \cite{Tei07}, \cite{Tei08}, \cite{Tho14}, \cite{Tho16}. 
On the other hand, singularities of plane curves have been studied for a long time and there is a main way to describe them by 
using infinitely near points, Puiseux series, and invariants such as
characteristic exponents, multiplicity sequences, intersection multiplicities, etc.  
In this case, there are many results that describe partial information associated to multiplier ideals, like the log-canonical threshold 
or more generally jumping numbers, by analyzing characteristic exponents or the contribution of exceptional divisors of a resolution of singularity,
see \cite{AAD16}, \cite{GHM16}, \cite{HJ18}, \cite{Igu77}, \cite{Jar07}, \cite{Jar06}, \cite{Kuw99}, \cite{Nai09},  \cite{ST06}, \cite{Tuc10}, \cite{Sai00}. By analyzing  properties and invariants of infinitely near points, such as proximity, 
free (or satellite) points and multiplicity sequences, and using unique factorization theorems for complete primary valuation ideals in a regular local ring of dimension 2, 
J\"{a}rvilehto \cite{Jar06} provided a thorough description of the jumping numbers of multiplier ideals of a simple complete ideal, 
in particular, of an analytically irreducible plane curve. In \cite{HJ18}, the authors generalize the formula to any complete ideal in a regular local ring of dimension 2. Taking a different path, Naie \cite{Nai09} developed independently a complete description of the jumping numbers for analytically irreducible plane curves in terms of the Zariski exponents.
 From a different perspective, Tucker \cite{Tuc10} presented an algorithm 
to find jumping numbers for any plane curve. In \cite{AAD16}, the authors improved Tucker's algorithm to compute more efficiently the jumping numbers of any ideal $\mathfrak{a}$ in a two-dimensional local ring $\so_{X,O}$ with a rational singularity. Moreover, given a fixed log resolution $\varphi:X'\to X$ of $\mathfrak{a}$ and any jumping number $\lambda$ of $\mathfrak{a}$, this algorithm managed to compute an antinef divisor $D_{\lambda,\varphi}$ such that
\[\mathfrak{I}(\mathfrak{a}^{\lambda})=\varphi_*\so_{X'}(-D_{\lambda,\varphi})\]
where $\mathfrak{I}(\mathfrak{a}^{\lambda})$ is the multiplier ideal associated to the ideal $\mathfrak{a}$ and the coefficient $\lambda$ (see \cite[Definition 9.2.3]{Laz04}).

In this paper, we provide a method to choose a set of standard factors $F=\{F_1,F_2,\ldots, F_{g-1}\}$ (see Definition \ref{D:standardfactors}) for a germ of irreducible plane curve $Z$. Moreover, we construct the standard form for the equation of $Z$ in terms of these standard factors  
  (see Proposition \ref{P:2}). Combining a classical result (see Proposition \ref{P:3}) that describes how a partial sum of the Puiseux series $S$ of $Z$ determines the standard resolution, we show that 
the set 
\[\big{\{}x^{p_x}y^{p_0}F_1^{p_1}\cdots F_{g-1}^{p_{g-1}}~|~\rho(x^{p_x}y^{p_0}F_1^{p_1}\cdots F_{g-1}^{p_{g-1}})> \alpha\big{\}}\] 
is a set of  generators of $\mathfrak{I}(\alpha Z)$.
Consequently, we give a full description of multiplier ideals associated to the irreducible plane curve singularity $Z$ as follows. 
\begin{maintheorem*}
    There exists a formula for the multiplier ideals $\mathfrak{I}(\alpha Z)$ with $0<\alpha<1$ in terms of a set of standard factors $F_1,\ldots, F_{g-1}$.
    For the precise statement, see Theorem \ref{T:main2}.
\end{maintheorem*}

By describing multiplier ideals completely, 
we also derive a formula for 
all the jumping numbers of $Z$ in Corollary \ref{C:jumpingnumber}, which recovers \cite[Theorem 9.4]{Jar06}. 
 A complete calculation using the method in this paper is given in Example \ref{Ex:1}.

We have recently learned of the papers \cite{AAB17} and \cite{Dur18}. In \cite{AAB17}, the authors provided a method to explicitly compute any complete ideal in a smooth complex surface. In particular, their algorithm gives an explicit description of multiplier ideals of irreducible plane curves in terms of maximal contact elements (see \cite[\S 2.4]{AAB17}). In the thesis \cite{Dur18}, the author developed another method to get the same formula by an algorithm inspired by \cite{Tuc10a} and \cite{How11}. Our construction of standard factors coincides with a special choice of maximal contact elements of $Z$ defined in \cite[\S 2.4]{AAB17} and approximate roots defined in \cite[Chapter 2 \S 1]{Dur18} \cite[Definition 3.2]{VD19}. These irreducible polynomials have been defined in the literature classically, see for example \cite{AM73a} \cite{AM73b}.  The formula for the generators of multiplier ideals of irreducible plane curves is the same as in the papers above, but we use a different method to show these elements are enough to generate the ideal, which is hopefully of independent interest. While the algorithms in \cite{AAD16} and\cite{AAB17} are based on Lipman's unique factorization theorem (see \cite[Corollary 3.1]{Lip94}), our method uses a computational result of J\"{a}rvilehto (see \cite[Proposition 7.14]{Jar06}) and an algorithm to construct of a standard form of the equation of the germ.

\section*{Acknowledgement}
I would like to express my deepest appreciation to my advisor Mihnea Popa for the continuous support of this project. Special thanks to Manuel Gonz\'{a}lez Villa  for pointing out to me the thesis of Duran \cite{Dur18}, and a referee for pointing out the paper \cite{AAB17}.
I would like to thank Kevin Tucker, Lawrence Ein, Antoni Rangachev, Pedro Daniel Gonz\'{a}lez P\'{e}rez, Miguel Robredo,
Sebasti\'{a}n Olano, Yajnaseni Dutta, Juanyong Wang, Stephen Shing-Toung Yau and Huaiqing Zuo for useful conversations.

\section{Preliminary}
\subsection{Puiseux series}\label{S:sectionPui}
Denote by $\fC\langle\langle x\rangle\rangle$ the field of formal Laurent series
\[\sum_{i\ge r}c_i x^{\frac{i}{n}}\]
with $r,n\in\rZ$, $n\ge 1$ (see \cite[\S 1.2]{Cas00} for the construction of this field). 
\begin{definition}\cite[\S 1.2, \S 2.2]{Cas00}\label{D:Puiseux}
    \begin{itemize}
        \item For any \[S=\sum_{i\ge r}c_i x^{\frac{i}{n}}\in\fC\langle\langle x\rangle\rangle,\]
        we define the \emph{order} in $x$ of $S$ to be
        \begin{align*}
        o_x(S)=\begin{cases}
            \infty,&\textrm{if}~S=0;\\
            \frac{\min\{i|c_i\neq 0\}}{n},&~\textrm{otherwise}.\\
        \end{cases}
        \end{align*}
        \item \emph{Puiseux series} are all such series $S$ with $o_x(S)>0$. 
        \item For any Puiseux series $S$, we can write $S=\sum_{i>0}c_i x^{\frac{i}{n}}$ such that $n$ is coprime to $\textrm{gcd}\{i|c_i\neq 0\}$. 
        Then $n$ is called the \emph{polydromy order} of $S$ and denoted as $n=\nu(S)$.
        \item For any Puiseux series $S=\sum_{i>0}c_ix^{\frac{i}{n}}$, let 
        \[[S]_{<l}:=\sum_{0<\frac{i}{n}<l}c_i x^{\frac{i}{n}}\]
        and
        \[[S]_{\le l}:=\sum_{0<\frac{i}{n}\le l}c_i x^{\frac{i}{n}}.\]
       
        \item For $n=\nu(S)$ and each $n$-th root of unity $\epsilon$, we call the series 
        \[\sigma_\epsilon(S)=\sum_{i\ge r}\epsilon^ic_ix^{\frac{i}{n}}\]
        a \emph{conjugate} of $S$.
        Let $f\in\fC[[x,y]]$. We say a Puiseux series $S$ is a \emph{$y$-root} of $f$ if $f(x,S)=0$. Its conjugates are also Puiseux series. If $S$ is a $y$-root 
        of $f$, then all conjugates of $S$ are $y$-roots of $f$, too.
        The set of all conjugates of $s$ will be called the \emph{conjugacy class} of $S$. 
        We set \[f_S=\prod_{i=1}^{\nu(S)}(y-S^{i})\in \fC[[x]][y],\]
        where $S=S^1,\ldots,S^{\nu(S)}$ are conjugates in the conjugacy class of $S$. We know $f_S\in\fC[[x]][y]$ since all its coefficients are 
        invariant by conjugation. 
        \item Let $O$ be the origin of $\fC^2$ and fix local coordinates $x,y$ at $O$. 
        Let $f$ be the equation of the germ of a plane curve $Z$ at $O$, that is to say, $Z=(f=0)$, $f\in\fC\{x,y\}$ and $f(0,0)=0$. 
        We call the $y$-roots of $f$ the \emph{Puiseux series of the germ $Z$}.
        \item  We say a Puiseux series $S$ is \emph{modified} if $o_x(S)$ is not an integer. 
    \end{itemize}
\end{definition}
The following lemmas illustrates the relation between irreducible germs of plane curves and conjugacy classes of Puiseux series.
\begin{lemma}\cite[Corollary 2.2.4]{Cas00}\label{L:singleconj}
  Let $Z$ be the germ of a plane curve at $O$ and fix local coordinates $x,y$ such that $Z$ does not contain the germ of the $y$-axis.
Then $Z$ is irreducible if and only if all its Puiseux series are in a single conjugacy class.
\end{lemma}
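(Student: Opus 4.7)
The plan is to factor $f$ completely over the Puiseux field, cluster the roots by conjugacy class, and identify the analytically irreducible factors of $f$ with these clusters. The key analytic input is the Newton--Puiseux theorem (together with the Weierstrass preparation theorem), and the key combinatorial input is that the cyclic group of $\nu(S)$-th roots of unity acts freely and transitively on the conjugacy class of a Puiseux series $S$.

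First I would set up the factorization. Since $Z$ does not contain the germ of the $y$-axis, a local equation $f\in\fC\{x,y\}$ of $Z$ is $y$-regular, so by Weierstrass preparation we may replace $f$ by a distinguished polynomial in $\fC\{x\}[y]$ of some degree $N$. The Newton--Puiseux theorem then yields a splitting
\[
f=\prod_{i=1}^{N}(y-S_{i})
\]
in $\fC\langle\langle x\rangle\rangle[y]$, where the $S_{i}$ are precisely the Puiseux series of $Z$. Partition $\{S_{1},\dots ,S_{N}\}$ into its conjugacy classes $C_{1},\dots ,C_{r}$ and write $f_{C_{j}}=\prod_{S\in C_{j}}(y-S)$. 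As remarked in Definition \ref{D:Puiseux}, each $f_{C_{j}}$ has coefficients in $\fC[[x]]$ (symmetry of elementary symmetric functions under conjugation), and a standard convergence argument places $f_{C_{j}}\in\fC\{x\}[y]$. Hence
\[
f=\prod_{j=1}^{r}f_{C_{j}}\qquad\text{in }\fC\{x,y\}.
\]

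For the $(\Leftarrow)$ direction I would show that each $f_{C_{j}}$ is irreducible in $\fC\{x,y\}$, so that the factorization above is the decomposition of $f$ into irreducibles and the number of analytic branches equals the number of conjugacy classes. Fix $S\in C_{j}$ with $n=\nu(S)$, and suppose $f_{S}=g\cdot h$ with $g,h\in\fC\{x\}[y]$ both monic in $y$ of positive degree (a Weierstrass-type reduction lets us assume this). Over $\fC\langle\langle x\rangle\rangle$, $g$ factors as $\prod_{T\in A}(y-T)$ for some nonempty proper subset $A$ of $C_{j}$. The cyclic group $\mu_{n}$ acts on the variable $x^{1/n}$ by $x^{1/n}\mapsto\epsilon\, x^{1/n}$, inducing the conjugation action $S\mapsto\sigma_{\epsilon}(S)$; the coefficients of $g$, living in $\fC[[x]]$, are invariant under this action, so $A$ must be $\mu_{n}$-stable. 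This is the one step that takes genuine thought: using the minimality of $\nu(S)$, namely that $n$ is coprime to $\gcd\{i\mid c_{i}\neq 0\}$, one checks that $\sigma_{\epsilon}(S)=S$ forces $\epsilon=1$, so $\mu_{n}$ acts freely on $C_{j}$; since $|C_{j}|=n=|\mu_{n}|$, the action is transitive. Therefore the only $\mu_{n}$-stable subsets of $C_{j}$ are $\emptyset$ and $C_{j}$, contradicting the existence of $A$.

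For the $(\Rightarrow)$ direction, if there are at least two conjugacy classes, then the displayed factorization $f=\prod_{j}f_{C_{j}}$ exhibits $f$ as a product of at least two factors of positive $y$-degree, each a non-unit in $\fC\{x,y\}$; hence $Z$ is reducible. Combining the two directions gives the equivalence. The principal obstacle, and the only nontrivial step, is the transitivity/freeness of the $\mu_{\nu(S)}$-action on the conjugacy class of $S$, which ultimately reduces to the coprimality built into the definition of $\nu(S)$.
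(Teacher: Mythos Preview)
The paper does not prove this lemma at all: it is quoted verbatim as \cite[Corollary~2.2.4]{Cas00} and used as a black box, so there is no ``paper's own proof'' to compare against. Your argument is the standard Galois-theoretic proof one finds in Casas-Alvero's book: factor the Weierstrass polynomial over the Puiseux field, group the roots into $\mu_{\nu(S)}$-orbits, and identify the irreducible analytic factors with the products $f_{C_j}$. The one substantive step, freeness of the $\mu_n$-action on the conjugacy class (hence $|C_j|=n$ and no proper Galois-stable subset), is handled correctly via the coprimality condition in the definition of $\nu(S)$.

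Two small points you could tighten. First, the ``standard convergence argument'' for $f_{C_j}\in\fC\{x\}[y]$ is better replaced by the observation that $\fC\{x,y\}$ is a UFD: factor $f$ there into irreducibles, apply Weierstrass preparation to each, and then your Galois argument shows the roots of each irreducible factor fill out exactly one conjugacy class---this avoids any analytic estimate. Second, your partition of $\{S_1,\dots,S_N\}$ into conjugacy classes tacitly assumes the $S_i$ are distinct, i.e.\ that the local equation $f$ is squarefree; this is harmless since a germ of a curve is reduced by convention, but it is worth saying once.
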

\begin{lemma}\cite[Corollary 1.8.5]{Cas00}\label{L:gap}
    Let $f\in\fC\{x,y\}$ with no factor $x$. Then $f$ is irreducible if and only if $f=uf_S$, with $u\in\fC\{x,y\}$ a unit and $S$
    a convergent Puiseux series.
\end{lemma}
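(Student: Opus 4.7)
The plan is to combine the Weierstrass Preparation Theorem with the Newton--Puiseux theorem and then invoke Lemma~\ref{L:singleconj}. Since $f$ has no factor of $x$, the series $f(0,y)$ is nonzero, so Weierstrass Preparation gives $f = u \cdot W$ with $u \in \fC\{x,y\}$ a unit and $W \in \fC\{x\}[y]$ a Weierstrass polynomial of some degree $d$ in $y$. Because $u$ is a unit, $f$ is irreducible in $\fC\{x,y\}$ if and only if $W$ is, so both directions reduce to the case of Weierstrass polynomials.

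For the easier direction, assume $f = u \cdot f_S$ with $S$ a convergent Puiseux series. The $y$-roots of $f_S$ are by definition the conjugates $S^1,\ldots,S^{\nu(S)}$, which lie in a single conjugacy class. By Lemma~\ref{L:singleconj}, the germ $(f_S=0)$ is irreducible, hence $f_S$ is irreducible in $\fC\{x,y\}$, and therefore so is $f$.

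For the converse, apply the Newton--Puiseux theorem to $W$: it splits over the field of convergent Puiseux series as $W = \prod_{i=1}^{d}(y - S_i)$, with each $S_i$ convergent. Conjugation partitions $\{S_1,\ldots,S_d\}$ into conjugacy classes $C_1,\ldots,C_k$, and grouping factors by class writes $W = \prod_{j=1}^{k} f_{T_j}$, where $T_j$ is any representative of $C_j$. Each coefficient of $f_{T_j}$ is an elementary symmetric function of the $\nu(T_j)$ convergent conjugates of $T_j$, so it is invariant under the action of $\nu(T_j)$-th roots of unity and converges on a common neighborhood; hence $f_{T_j} \in \fC\{x\}[y] \subset \fC\{x,y\}$. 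By the first direction each factor is irreducible in $\fC\{x,y\}$, so if $W$ is irreducible one must have $k = 1$, and then $W = f_S$ for a single convergent Puiseux series $S$, giving $f = u \cdot f_S$.

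The main subtlety is verifying that grouping Newton--Puiseux roots by conjugacy class produces genuine convergent factors $f_{T_j}$ inside $\fC\{x,y\}$, rather than merely formal factors in $\fC[[x]][y]$. This rests on two points: that the Newton--Puiseux expansions of a convergent Weierstrass polynomial are themselves convergent on a common punctured disc, and that the action of $\sigma_\epsilon$ preserves radius of convergence. Once convergence is settled, the remaining work is essentially bookkeeping about conjugacy classes, for which Lemma~\ref{L:singleconj} supplies the key input.
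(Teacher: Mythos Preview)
The paper does not supply its own proof of this lemma: it is quoted verbatim as \cite[Corollary~1.8.5]{Cas00} and used as a black box. So there is nothing in the paper to compare your argument against beyond noting that your outline---Weierstrass Preparation, Newton--Puiseux factorization over the Puiseux field, grouping roots by conjugacy class, and checking that each $f_{T_j}$ lands in $\fC\{x\}[y]$---is the standard route and is essentially what one finds in Casas-Alvero's book.

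One point deserves care. In the forward direction you invoke Lemma~\ref{L:singleconj} to pass from ``the $y$-roots of $f_S$ form a single conjugacy class'' to ``$f_S$ is irreducible in $\fC\{x,y\}$.'' In \cite{Cas00} the result you are proving is Corollary~1.8.5, while Lemma~\ref{L:singleconj} is Corollary~2.2.4, which comes later and is typically \emph{deduced from} the present lemma; so as written your argument risks circularity. The fix is easy and makes the proof self-contained: argue directly that any factor $g\mid f_S$ in $\fC\{x\}[y]$ has a root set that must be stable under $\sigma_\epsilon$ for all $\nu(S)$-th roots of unity $\epsilon$ (because the coefficients of $g$ lie in $\fC\{x\}$), hence is either empty or the whole conjugacy class; together with the observation that the $\nu(S)$ conjugates of $S$ are pairwise distinct (so $f_S$ is squarefree), this forces $g$ to be a unit or an associate of $f_S$. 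With that replacement your proof is complete.
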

\subsection{The Newton-Puiseux algorithm}\label{S:NPalgorithm}

\begin{definition}Fix  a system of orthogonal coordinates $\alpha, \beta$ of the plane $\fR^2$.
 For any element \[f=\sum_{\alpha,\beta\ge0}c_{\alpha,\beta}x^{\alpha}y^{\beta}\in \fC[[x,y]],\]
  we denote by 
        \[\Delta(f)=\{(\alpha,\beta)~|~c_{\alpha,\beta}\neq 0\}\]
        a discrete set of points and call it the \emph{Newton diagram of} $f$. We consider the convex hull $\bar{\Delta}(f)$ of $\Delta(f)+(\fR^+)^2$ and call the union of 
        compact faces of $\bar{\Delta}(f)$ the \emph{Newton polygon} of $f$, denoted by $N(f)$. Notice that $N(f)$ may be a single vertex. Suppose the vertices of $N(f)$ 
       are $P_i=(\alpha_i,\beta_i)$, $i=0,\ldots,k$ with $\alpha_{i-1}<\alpha_i$ and $\beta_{i-1}>\beta_i$, $i=1,\ldots, k$. Then we define the height of $N(f)$ to be
       $h(N(f)):=\beta_0$.

\end{definition}
We will review the Newton-Puiseux algorithm which provides all $y$-roots of a given formal power series $f\in\fC[[x,y]]$.
Details of algorithm can be consulted in textbooks about singularities of plane curve (see for example 
\cite[\S 1.4]{Cas00}).
\begin{npalgorithm*}
    Fix a series
\[f(x,y)=\sum_{\alpha,\beta\ge 0}c^{(0)}_{\alpha,\beta}x^\alpha y^\beta\in\fC[[x,y]].\]
Assume further $h(N(f))>0$, then the Newton polygon $N(f)$ ends in the $\alpha$-axis. 

\noindent\textbf{Step 1:} If $N(f)$ ends above the $\alpha$-axis. Then we get a $y$-root $S=0$ and the algorithm stops here. 
    Otherwise, if $N(f)$ ends in the $\alpha$-axis, then we choose a side $\Gamma_0$ of $N(f)$. Set $\beta '=\min\{\beta|(\alpha,\beta)\in\Gamma_0\}$ and denote by $(\alpha ',\beta ')$ the
    corresponding point on $\Gamma_0$. Then we associate a polynomial 
    \[F_{\Gamma_0}=\sum_{(\alpha,\beta\in\Gamma)}c^{(0)}_{\alpha,\beta}Z^{\beta-\beta '}\in\fC[Z]\]
    to the side $\Gamma_0$. Choose a root $a$ of $F_{\Gamma_0}$. Write down the equation of $\Gamma_0$ as $n\alpha+m\beta=k$, where $\mathrm{gcd}(n,m)=1$. 
    The coefficients $n,m$ and the root $a$ determine a coordinate change
    \begin{align*}
     \begin{cases}
         x=x_1^{n}\\
         y=x_1^{m}(a+y_1).\\
     \end{cases}   
    \end{align*}
    Then we set $f(x,y)=x_1^kf_1(x_1,y_1)$. Denote
     \[f_1(x_1,y_1)=\sum_{\alpha_1,\beta_1\ge 0}c^{(1)}_{\alpha_1,\beta_1}x_1^{\alpha_1}y_1^{\beta_1}.\]

\noindent\textbf{Step 2:} Inductively, for $i\ge 1$, if the Newton polygon $N(f_{i})$ ends in the $\alpha_1$-axis, then we choose a side $\Gamma_i$ of $N(f_i)$. Similar as above we associate a polynomial 
$F_{\Gamma_i}$ to the side $\Gamma_i$ and choose a root $a_i$ of $F_{\Gamma_i}$. The equation of $\Gamma_i$ will be $n_i\alpha+m_i\beta=k_i$ where $\mathrm{gcd}(n_i,m_i)=1$. 
The new variables $x_{i+1},y_{i+1}$ are given by the rules
\begin{align*}
    \begin{cases}
        x_i=x_{i+1}^{n_i}\\
        y_i=x_{i+1}^{m_i}(a_i+y_{i+1}).\\
    \end{cases}   
   \end{align*}
We denote by $f_i=x_{i+1}^{k_i}f_{i+1}$.    

The algorithm will stop whenever $N(f_i)$ ends above the $\alpha_i$-axis. In this case, we will get a $y$-root
\[S=x^{\frac{m}{n}}(a+x_1^{\frac{m_1}{n_1}}(a_1+\cdots+x_{i-1}^{\frac{m_{i-1}}{n_{i-1}}}(a_{i-1}+0)\cdots)).\]
Otherwise, the algorithm may keep going on and we can only write down the initial part of the $y$-root as
\[S=ax^{\frac{m}{n}}+a_1x^{\frac{m_1}{nn_1}}+\cdots+a_{i}x^{\frac{m_i}{nn_1\cdots n_i}}+\cdots.\]
\end{npalgorithm*}

The following lemma will be useful for finding the standard form of the power series of an irreducible plane curve.
\begin{lemma}\cite[Proposition 1.5.7]{Cas00}\label{L:Puial}
    A Puiseux series $S$ is a $y$-root of $f$ if and only if it is obtained from $f$ by the Newton-Puiseux algorithm.
\end{lemma}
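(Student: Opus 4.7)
The plan is to prove the equivalence by tracking the Newton-Puiseux algorithm carefully through the recursive sequence of coordinate changes. For the direction $(\Leftarrow)$, I would exploit the factorization
\[ f(x,y) = x_1^{k_0} x_2^{k_1} \cdots x_i^{k_{i-1}} \, f_i(x_i, y_i) \]
that results after $i$ iterations of the substitution $x_{j-1} = x_j^{n_{j-1}}$, $y_{j-1} = x_j^{m_{j-1}}(a_{j-1} + y_j)$. If the algorithm terminates at step $i$ because $N(f_i)$ ends above the $\alpha_i$-axis, then $y_i$ divides $f_i$, so setting $y_i = 0$, which corresponds to substituting the finite Puiseux series $S$ produced by the algorithm for $y$, forces $f_i(x_i, 0) = 0$ and hence $f(x, S) = 0$. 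Thus $S$ is a $y$-root.

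For the direction $(\Rightarrow)$, I would argue inductively that each leading term of a given Puiseux root $S$ is produced by an appropriate choice of side and root in some step of the algorithm. Write $S = ax^{m/n} + \text{(higher order)}$ with $a \neq 0$ and $\gcd(m,n) = 1$. Substituting $y = S$ into $f = \sum c_{\alpha, \beta} x^\alpha y^\beta$, the contributions of lowest $x$-order correspond precisely to the pairs $(\alpha, \beta)$ minimizing $n\alpha + m\beta$; these lie on a single compact side $\Gamma_0$ of the Newton polygon $N(f)$. For $f(x,S)$ to vanish, the sum of these leading contributions must cancel, which means $\sum_{(\alpha, \beta) \in \Gamma_0} c_{\alpha, \beta}\, a^{\beta - \beta'} = F_{\Gamma_0}(a) = 0$, so $a$ is indeed a root of the characteristic polynomial $F_{\Gamma_0}$. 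Choosing this side and this root, the prescribed coordinate change converts $f$ into $x_1^{k_0} f_1$, and one checks directly that the transform of $S - ax^{m/n}$ is a $y_1$-root of $f_1$ whose leading exponent is strictly larger than the previous one. Iterating, the algorithm, with these choices, reproduces $S$ term by term.

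The main obstacle I foresee is reconciling the two cases in the non-terminating branch. On the one hand, in $(\Leftarrow)$ I must justify that an infinite sequence of algorithm steps really defines a Puiseux series, which requires showing that the exponents $m / n + m_1/(nn_1) + \cdots + m_i/(nn_1 \cdots n_i)$ have a common denominator, or at least admit fractional parts with bounded denominator consistent with Definition \ref{D:Puiseux}; and that $f(x, S) = 0$ holds $x$-adically because $o_x(f(x, S))$ exceeds the partial sum of the $k_j$'s, which diverges. On the other hand, in $(\Rightarrow)$ I must verify that when $S$ is given, no extraneous cancellation prevents the $(n, m, a)$ read off from $S$ from matching a legal algorithmic choice at every stage, and that the Newton polygon argument in fact isolates the correct side even when $N(f)$ has several sides. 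Lemma \ref{L:gap} and the uniqueness of the conjugacy class in Lemma \ref{L:singleconj} give the bookkeeping to control these issues simultaneously, ensuring that the recursion in $(\Rightarrow)$ converges to the same series whose existence was assumed.
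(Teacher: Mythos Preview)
The paper does not prove this lemma at all: it is quoted verbatim from \cite[Proposition 1.5.7]{Cas00} and simply cited without argument. There is therefore no ``paper's own proof'' to compare your proposal against.

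That said, your outline follows the standard textbook argument (essentially the one in Casas-Alvero's book) and is broadly sound. Two small points: first, your appeal to Lemma~\ref{L:gap} and Lemma~\ref{L:singleconj} at the end is misplaced---those lemmas concern irreducibility and conjugacy classes, whereas the present statement is for an arbitrary $f\in\fC[[x,y]]$ and does not require any irreducibility hypothesis; the bookkeeping you need is purely combinatorial (bounding the polydromy order by $h(N(f))$ via the fact that $n_i=1$ once the height drops to $1$). Second, in the $(\Rightarrow)$ direction you should be explicit that the side $\Gamma_0$ of slope $-n/m$ actually appears as a side of $N(f)$ (not merely a supporting line), which follows because the lowest-order cancellation forces at least two lattice points of $\Delta(f)$ on that line.
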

\subsection{The characteristic sequence and the multiplicity sequence}~

We recall further invariants associated to Puiseux series.
\begin{definition} \label{D:chaseq}
    \begin{itemize}
        \item For any Puiseux series 
        $S=\sum_{j\ge n}c_jx^{\frac{j}{n}}$
        that is not an integral power series and with polydromy order $\nu(S)=n$, it can be written as 
        \[S=\sum_{\substack{t\in\rN\\t<\frac{m_1}{n}}}c_{tn}x^t+a_1 x^{\frac{m_1}{n}}+\sum_{\substack{\eta\in(d_1)\\m_1<\eta<m_2}}b_\eta x^{\frac{\eta}{n}}+a_2x^{\frac{m_2}{n}}
            +\cdots+a_gx^{\frac{m_g}{n}}+\sum_{j>m_g}c_jx^{\frac{j}{n}},\]
        where  $d_0=n$, $d_i=\textrm{gcd}(m_i,\ldots,m_1,n)$, $m_i\notin (d_{i-1})$ and $d_{g-1}>d_g=1$. 
         Then the sequence 
        $(n;m_1,\ldots,m_g)$ is called the \emph{characteristic sequence} of $S$. Given any germ of irreducible plane curve singularity $Z$, 
        we define the \emph{characteristic sequence} of $Z$ to be the characteristic sequence of any $y$-root of $Z$. It is well defined since
        all $y$-roots of the irreducible curve $Z$ are conjugate by Lemma \ref{L:singleconj} and hence have the same characteristic sequence.   
         The characteristic sequence of $Z$ is independent of the choice of coordinates $x,y$ around $O$ as long as $Z$ is not tangent to the $y$-axis.
        \item For the germ of a plane curve $Z$, there exists a unique ``smallest'' resolution of the singularity which is a birational 
        morphism $\pi$
        consisted by the smallest 
        number of blow-ups 
        \[\pi:Y=Y_k\stackrel{\pi_k}{\xrightarrow{\hspace*{0.7cm}}}Y_{k-1}\stackrel{\pi_{k-1}}{\xrightarrow{\hspace*{0.7cm}}}\cdots \xrightarrow{\hspace*{0.7cm}} Y_1\stackrel{\pi_1}{\xrightarrow{\hspace*{0.7cm}}} Y_0=\fC^2\]
        with $\pi^*Z$ having normal crossing support. We call this resolution \emph{the standard resolution of $Z$} (see \cite[Definition, page 498]{BK86}). The multiplicity of the strict transform of $Z$ at each center $q_i$ of the blow-up $\pi_i$ is denoted as 
        $M_i$. Then the sequence $(M_1,\ldots,M_k)$ is called the \emph{multiplicity sequence} of $Z$.
    \end{itemize}
\end{definition}
\begin{remark}
    Fix local coordinates $x,y$ at $O$. Then the irreducible curve singularity $O\in Z$ is not tangent to $y$-axis if and only if the characteristic sequence of $Z$
    $(n;m_1,\ldots,m_g)$ satisfies that $n<m_1$. 
  
    In this paper, we will always assume $Z$ is not tangent to the $y$-axis, then the characteristic sequence is independent of the choice of coordinates $x,y$ around $O$.
  \end{remark}
 The following theorem of Enriques and Chisini \cite{EC24} tells us 
that the
multiplicity sequence of $Z$ and the characteristic sequence of $Z$ determine each other if $Z$ is an irreducible plane curve singularity. 
\begin{theorem}[Enriques-Chisini Theorem]\label{T:ECT}
    Given $(n;m_1,\ldots,m_g)$ the characteristic sequence of an irreducible plane curve singularity $Z$, 
    we get the following chain of $g$  Euclidean algorithms:
\begin{align}\label{E:3}
    \begin{cases}
    m_1=h_{1,0}n+r_{1,1}\\
    n=h_{1,1}r_{1,1}+r_{1,2}\\
    \vdots\\
    r_{1,k_1-1}=h_{1,k_1}r_{1,k_1}\\
    m_2-m_1=h_{2,0}r_{1,k1}+r_{2,1}\\
    r_{1,k_1}=h_{2,1}r_{2,1}+r_{2,2}\\
    \vdots\\
    r_{2,k_2-1}=h_{2,k_2}r_{2,k+2}\\
    \vdots\\
    m_g-m_{g-1}=h_{g,0}r_{g-1,k_2}+r_{g,1}\\
    \vdots\\
    r_{g,k_g-1}=h_{g,k_g}r_{g,k_g},\\
    \end{cases}
\end{align}
where we set $n=r_{1,0}$, $r_{i.k_i}=r_{i+1,0}$, and $1\le r_{i,j}<r_{i,j-1}$ for any $1\le i\le g$ and $1\le j\le k_i$.
Then the multiplicity sequence $(M_1,\ldots, M_k)$ of $Z$ is equal to 
\[(\overbrace{n,\ldots,n}^{h_{1,0}},\overbrace{r_{1,1},\ldots,r_{1,1}}^{h_{1,1}},\ldots,\overbrace{r_{1,k_1},\ldots,r_{1,k_1}}^{h_{1,k_1}+h_{2,0}},
\ldots, \overbrace{r_{g,k_g},\ldots,r_{g,k_g}}^{h_{g,k_g}}).\]
Conversely, given the multiplicity sequence of an irreducible plane curve singularity $Z$, one can recover the characteristic sequence of $Z$ 
by the chain of Euclidean algorithms. 
\end{theorem}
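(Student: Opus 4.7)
The plan is to prove the forward direction by induction on the number of characteristic exponents $g$, using the Newton--Puiseux parametrization of $Z$ and tracking how it transforms under the successive blow-ups of the standard resolution. The converse follows from the reversibility of the Euclidean algorithm once one knows how to segment the multiplicity sequence into its maximal constant blocks. Throughout, I use the assumption that $Z$ is not tangent to the $y$-axis, so a primitive parametrization has the form $x=t^{n}$, $y=\sum_j c_j t^{j}$ with $n<m_1$, and the multiplicity of $Z$ at $O$ equals $\min(n,m_1)=n$, which already accounts for the first term in the block $\underbrace{n,\ldots,n}_{h_{1,0}}$.

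First, I would establish the effect of a single blow-up at the origin in the chart $x=x'$, $y=x'y'$: the strict transform of $Z$ is parametrized by $x'=t^{n}$, $y'=\sum_j c_j t^{j-n}$, so its Puiseux series becomes $c_1 x'^{(m_1-n)/n}+\cdots$. As long as $m_1-kn\ge n$, the multiplicity stays equal to $n$; this happens for exactly $h_{1,0}=\lfloor m_1/n\rfloor$ steps (each along a free infinitely near point), after which the leading exponent drops to $r_{1,1}=m_1-h_{1,0}n<n$. At that moment the roles of the two coordinates swap at a satellite point, and iterating the same local analysis, the subsequent blow-ups produce $h_{1,1}$ steps of multiplicity $r_{1,1}$, $h_{1,2}$ steps of multiplicity $r_{1,2}$, and so on down to $h_{1,k_1}$ steps of multiplicity $r_{1,k_1}=d_1$. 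This is precisely the Euclidean algorithm applied to the pair $(n,m_1)$, and it reproduces the first group of blocks of the claimed multiplicity sequence.

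Next I would perform the transition to the next characteristic exponent. After the first group of blow-ups, the terms of $S$ of order strictly less than $m_2/n$ (all of which lie in $(d_1)$) have been absorbed into the coordinate changes, and the strict transform is uniformized as $x=t^{d_1}$, $y=a_2 t^{(m_2-m_1)/1}+\cdots$ in suitable local coordinates adapted to the exceptional configuration, with new polydromy $d_1$ and new leading exponent $m_2-m_1$. Running the previous analysis on this reduced parametrization produces $h_{2,0}$ more blow-ups of multiplicity $d_1=r_{1,k_1}$, merging with the final $h_{1,k_1}$ steps of the previous group into a single block of length $h_{1,k_1}+h_{2,0}$, followed by the Euclidean algorithm on $(d_1,m_2-m_1)$, which yields the second group of blocks. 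Iterating this for $i=2,\ldots,g$ by induction completes the forward direction; the final step $h_{g,k_g}$ occurs when the parametrization becomes smooth ($d_g=1$), so the resolution terminates and no further blow-ups affect $Z$. For the converse, given the multiplicity sequence one reads off the constant blocks, which determine the $h_{i,j}$ uniquely (with the only subtlety that the block joining group $i-1$ and group $i$ has length $h_{i-1,k_{i-1}}+h_{i,0}$; the partition into two summands is forced by the requirement $r_{i,0}=r_{i-1,k_{i-1}}$ and the condition $r_{i,j}<r_{i,j-1}$), and then the $r_{i,j}$ and $m_i$ are reconstructed by running the Euclidean relations upward.

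The main obstacle I expect is the transition step between consecutive characteristic groups: the blow-up center at that moment is a satellite point, and identifying the effective leading exponent of the strict transform after all the intermediate free/satellite coordinate changes requires careful bookkeeping of the Newton--Puiseux changes of variables from Section \ref{S:NPalgorithm}. In particular, one must verify that the exponents of $S$ in the gap between $m_{i-1}/n$ and $m_i/n$ contribute only unit factors to the strict transform rather than new sides of the Newton polygon, so that the next relevant exponent is indeed $m_i-m_{i-1}$ with polydromy $d_{i-1}$; this is exactly where Lemma \ref{L:Puial} and the structure of the characteristic sequence in Definition \ref{D:chaseq} are essential.
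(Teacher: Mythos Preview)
The paper does not prove this theorem at all: it is stated as the classical Enriques--Chisini theorem with a citation to \cite{EC24} (and implicitly to textbook treatments such as \cite{BK86} and \cite{Cas00}), and no argument is given. So there is no ``paper's own proof'' to compare against.

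Your outline is the standard textbook argument and is essentially correct. A few remarks on accuracy. First, in your transition step you write the strict transform as $x=t^{d_1}$, $y=a_2 t^{(m_2-m_1)/1}+\cdots$; the ``$/1$'' is a slip, and more importantly the local coordinates after completing the first Euclidean block are not the naive $(x,y)$ but coordinates adapted to the two exceptional curves meeting at the last satellite point, so the clean statement is that the strict transform is an irreducible germ with characteristic sequence $(d_1;\,m_2-m_1,\,m_3-m_1,\ldots,m_g-m_1)$ in those coordinates. This is exactly what drives the next Euclidean block, and it is the content of results such as \cite[5.5--5.6]{Cas00}; you should cite or prove that transformation law rather than assert the parametrization directly. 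Second, your treatment of the converse is slightly optimistic: from the raw multiplicity sequence one sees the maximal constant blocks and hence all the $r_{i,j}$, but the split of a bridging block into $h_{i-1,k_{i-1}}+h_{i,0}$ is determined not by the inequalities $r_{i,j}<r_{i,j-1}$ alone but by the divisibility pattern (each $r_{i,j}$ must divide $r_{i,j-1}+ \text{something}$ consistently with a Euclidean algorithm, equivalently by detecting where $r_{i,k_i}\mid r_{i,k_i-1}$). Stating this criterion explicitly would close the only real gap in your sketch.
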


\subsection{Proximity matrix and multiplier ideals}~

We recall the proximity relation of inifinitely near points and its connection to multiplier ideals.
\begin{definition}\cite[\S 3.3]{Cas00}
    Let $O$ be a point on a smooth surface $Y$. We call points on the exceptional divisor $E$ of 
    blowing up $O$ on $Y$ \emph{points in the first infinitesimal neighborhood of $O$ on $Y$}. Inductively, 
    for $i>0$, we define \emph{the points in the $i$-th 
    infinitesimal neighborhood of $O$ on $Y$} to be the points in the first infinitesimal neighborhood of some point in the $(i-1)$-th infinitesimal 
    neighborhood of $O$. We call a point \emph{a infinitely near point of $O$ on $Y$} if 
    it is in the $i$-th infinitesimal neighborhood of $O$ on $Y$ for some $i>0$.
\end{definition}
\begin{definition}\label{D:4}
    Let $p,q$ be points equal or infinitely near to $O$. The point $q$ is said to be \emph{proximate to} $p$ if and only if it belongs, as an ordinary or
    infinitely near point, to the exceptional divisor $E^p$ of blowing up the point $p$ and we denote it by $q\succ p$.
The \emph{proximity matrix} of a sequence of infinitely near points $q_1,\ldots,q_k$ of $O$ on $\fC^2$
reads
\[P:=(p_{i,j})_{k\times k},~\textrm{where}~p_{i,j}=\begin{cases}1,&\textrm{if}~i=j;\\
    -1,&\textrm{if}~q_i\succ q_j;\\
    0,&\textrm{otherwise}.
\end{cases}\]
\end{definition}  
 \begin{notation}\label{N:gamtau}
     Set $\gamma_0=0$ and for $1\le i\le g$, set integers $\gamma_i$, $\tau_{i-1}$ such that
     \[\gamma_i=\gamma_{i-1}+\sum_{j=0}^{k_i}h_{i,j},\hspace{10pt}\textrm{and}\hspace{10pt}\tau_{i-1}=\gamma_{i-1}+h_{i,0}+1，\]
     where $h_{i,j}$ are given in (\ref{E:3}).
 \end{notation}
 \begin{definition}\cite[\S 3.6]{Cas00}\label{D:6}
An infinitely near point $p$ of $O$ is called a \emph{free point} of $O$ if it is proximate to just one point equal or infinitely near to $O$. 
Otherwise, $p$ is called a \emph{satellite point} of $O$.
     \end{definition}
 \begin{remark}
    Let 
    \[\pi:Y=Y_k\stackrel{\pi_k}{\xrightarrow{\hspace*{0.7cm}}}Y_{k-1}\stackrel{\pi_{k-1}}{\xrightarrow{\hspace*{0.7cm}}}\cdots \xrightarrow{\hspace*{0.7cm}} Y_1\stackrel{\pi_1}{\xrightarrow{\hspace*{0.7cm}}} Y_0=\fC^2\]
    be the standard resolution of $Z$.
       Let $q_1,\ldots, q_k$ be the centers of the blow-ups $\pi_1,\ldots,\pi_k$ in the standard resolution of $Z$.
 Then $q_r$ is a free point of $O$ when $\gamma_j<r\le \tau_j$ for some $0\le j\le g-1$, and $q_r$ is a satellite point of $O$ when 
 $\tau_j<r\le \gamma_{j+1}$ for some $0\le j\le g-1$.
 Centers $q_{\gamma_1}<q_{\gamma_2}<\cdots<q_{\gamma_g}$ (or $q_{\tau_0}<q_{\tau_1}<\cdots<q_{\tau_{g-1}}$) 
 are exactly all the terminal satellite (or free) points of the point basis $(M_1,\ldots, M_k)$, respectively, defined in \cite[Definition 3.1]{Jar06}. 
 \end{remark}

 \begin{definition}\label{N:coeffexc}
   Let $Z$ be the germ of an irreducible plane curve and
   let $q_1,\ldots, q_k$ be the centers of the blow-ups $\pi_1,\ldots,\pi_k$ in the standard resolution of $Z$. Denote by $P$ the proximity matrix of 
    infinitely near points $q_1,\ldots, q_j$ of $O$.
   We call the matrix $P^{-1}=(x_{i,j})_{k\times k}$ the \emph{inverse proximity matrix} of $Z$ and denote by 
    \[X_i=[x_{i,1},\ldots,x_{i,k}]\]
     the $i$th
 row of $P^{-1}$.
 \end{definition}
 By elementary computations we get the following formulas:
\begin{lemma}\label{L:7} 
  We have, for any $1\le i\le g$,  
    \[X_{\gamma_i}=[\overbrace{\frac{n}{d_i},\ldots,\frac{n}{d_i}}^{h_{1,0}}, 
    \overbrace{\frac{r_{1,1}}{d_i},\ldots,\frac{r_{1,1}}{d_i}}^{h_{1,1}}
    ,\ldots,
    \overbrace{\frac{r_{j-1,k_{i-1}} }{d_i},\ldots,\frac{r_{i-1,k_{i-1}} }{d_i}}^{h_{i-1,k_{i-1}}+h_{i,0}},
    \ldots,\overbrace{\frac{r_{i,k_i}}{d_i},\ldots,\frac{r_{i,k_i}}{d_i}}^{h_{i,k_i}},0,\ldots,0],\]
    and for any $0\le j\le g-1$,
    \[X_{\tau_j}=[\overbrace{\frac{n}{d_j},\ldots,\frac{n}{d_j}}^{h_{1,0}},
    \ldots,
    \overbrace{\frac{r_{j-1,k_{j-1}}}{d_j},\ldots,\frac{r_{j-1,k_{j-1}}}{d_j}}^{h_{j-1,k_{j-1}}+h_{j,0}},
    \ldots,
    \overbrace{\frac{r_{j,k_j}}{d_j},\ldots,\frac{r_{j,k_j}}{d_j}}^{h_{j,k_j}},\overbrace{1,\ldots,1}^{h_{j+1,0}+1},0,\ldots,0].\]
\end{lemma}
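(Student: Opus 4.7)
The plan is to verify the claimed formulas by a direct check of the matrix identity $X_l\,P = e_l^{\mathsf T}$, for $l = \gamma_i$ and $l = \tau_j$. Since $P$ is lower triangular with diagonal $1$ and off-diagonal entry $-1$ in position $(s,t)$ exactly when $q_s \succ q_t$, the required identity expands entrywise as
\[
(X_{\gamma_i})_t \;-\; \sum_{s:\,q_s \succ q_t}(X_{\gamma_i})_s \;=\; \delta_{t,\gamma_i},
\]
and analogously for $X_{\tau_j}$. Once the proximate successors of each centre are enumerated, what remains is a sequence of numerical identities to check.

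First I would record the proximity structure of the standard resolution in terms of the Enriques--Chisini data $(\ref{E:3})$. For every $l\ge 2$ the point $q_l$ is proximate to $q_{l-1}$, and a satellite centre (one with $\tau_s < l \le \gamma_{s+1}$) is additionally proximate to exactly one earlier centre. Along each satellite string $q_{\tau_s+1},\ldots,q_{\gamma_{s+1}}$ the second proximities follow the Hirzebruch--Jung pattern dictated by the Euclidean sub-algorithm on the pair $(r_{s,k_s},\,m_{s+1}-m_s)$: the first $h_{s+1,1}$ satellites are proximate to $q_{\tau_s}$, the next $h_{s+1,2}$ are proximate to the last point of the preceding block, and so on. In particular, the first satellite of each chain is always proximate to the two most recent centres.

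Second, with these proximities in hand I would verify the identity entrywise. For $t > \gamma_i$, $(X_{\gamma_i})_t = 0$ and each satellite $q_s$ with $s > \gamma_i$ proximate to $q_t$ also has $(X_{\gamma_i})_s = 0$, so both sides vanish. For $t < \gamma_i$, the proximity $q_{t+1} \succ q_t$ contributes the next entry of $X_{\gamma_i}$; whenever $(X_{\gamma_i})_t = (X_{\gamma_i})_{t+1}$ (inside a run of constant value) and no satellite reaches back to $q_t$, the identity reduces to $0 = 0$. At the boundary of a run, where the entry drops from $r_{s,j-1}/d_i$ to $r_{s,j}/d_i$, the incoming satellite proximities from the Hirzebruch--Jung string contribute $h_{s,j}$ copies of $r_{s,j}/d_i$ and one copy of $r_{s,j+1}/d_i$, so the identity becomes precisely the Euclidean relation $r_{s,j-1} = h_{s,j}\,r_{s,j} + r_{s,j+1}$ of $(\ref{E:3})$ divided by $d_i$. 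The residual $+1$ at $t = \gamma_i$ is produced from the final nonzero entry by the same bookkeeping. The argument for $X_{\tau_j}$ is parallel: the tail block of $1$'s encodes the first free steps of the next segment and shifts the Euclidean identity by one so as to yield the Kronecker delta at $t = \tau_j$.

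The main obstacle is purely combinatorial: the enumeration of the second proximities along each satellite chain, especially at the transitions between satellite and free segments, must match the $h_{s,j}$-blocks exactly, and one must keep track of where each satellite proximity from a later block actually `lands' on an earlier centre. Once this correspondence is laid out carefully, the algebraic checks reduce mechanically to the Euclidean relations $(\ref{E:3})$.
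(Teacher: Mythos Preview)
The paper does not give a proof of this lemma at all: it is stated immediately after the sentence ``By elementary computations we get the following formulas,'' with no further argument. Your proposal is a correct and natural way to carry out those elementary computations---verifying $X_l P = e_l^{\mathsf T}$ entrywise reduces, via the proximity pattern of the standard resolution, exactly to the Euclidean identities in $(\ref{E:3})$, as you describe. So your approach is not different from the paper's; it is a fleshing-out of what the paper leaves to the reader.
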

\begin{notation}
Let $G\in\fC\{x,y\}$.  Denote by $C_G$ the divisor defined  by $G$ and let
\[\textrm{ord}(G):=[\textrm{mult}_{q_1}\tilde{C}_G,\ldots,\textrm{mult}_{q_k}\tilde{C}_G],\]
where we denote by $\tilde{C_G}$ the strict transform of $C$ in $Y_1,\ldots, Y_k$.
\end{notation}
The following lemma is probably well known to experts, but we shall give a proof for the convenience of readers.
\begin{lemma}\label{L:pullbackG}
   Let 
   \[\pi:Y=Y_k\stackrel{\pi_k}{\xrightarrow{\hspace*{0.7cm}}}
   Y_{k-1}\stackrel{\pi_{k-1}}{\xrightarrow{\hspace*{0.7cm}}}\cdots \xrightarrow{\hspace*{0.7cm}} 
   Y_1\stackrel{\pi_1}{\xrightarrow{\hspace*{0.7cm}}} Y_0=\fC^2\] 
   be the standard resolution of $Z$ and $P^{-1}$ be the inverse proximity matrix of $Z$.
    Then 
    \[\pi^*C_G=\tilde{C_G}+(\mathrm{ord}(G)\cdot X_1 )E_1+\cdots+(\mathrm{ord}(G)\cdot X_k)E_k.\]
\end{lemma}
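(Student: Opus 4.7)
The plan is to compute $\pi^* C_G$ coefficient-by-coefficient in the basis of strict transforms $E_1,\ldots,E_k$ of the exceptional divisors. Let $e_i := \pi^* E^{q_i}$ denote the total transform in $Y$ of the $i$-th exceptional divisor. Iterating the elementary blow-up formula $\pi_i^*\tilde{C}_{G,i-1} = \tilde{C}_{G,i} + m_i E^{q_i}$, where $m_i := \mathrm{mult}_{q_i}\tilde{C}_{G,i-1}$ is the $i$-th entry of $\mathrm{ord}(G)$, an immediate induction on $i$ yields
\[ \pi^* C_G \;=\; \tilde{C}_G + \sum_{i=1}^{k} m_i\,e_i. \]
The remaining task is to express each $e_i$ in the basis $\{E_j\}_{j=1}^{k}$.

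The key step is the proximity identity
\[ e_j \;=\; E_j + \sum_{l:\,q_l \succ q_j} e_l, \]
which I would prove by induction on the level of the resolution: for $l>j$, the total transform of $E^{q_j}$ picks up exactly one extra copy of $E^{q_l}$ precisely when $q_l$ lies on the current strict transform of $E^{q_j}$, that is, when $q_l \succ q_j$. Written in matrix form with the column vectors $e = (e_1,\ldots,e_k)^T$ and $E = (E_1,\ldots,E_k)^T$, this identity reads $E = P^T e$, so $e = (P^{-1})^T E$ and the coefficient of $E_j$ in $e_i$ is $(P^{-1})_{j,i} = x_{j,i}$.

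Combining the two steps, the coefficient of $E_j$ in $\pi^* C_G - \tilde{C}_G$ is $\sum_i m_i x_{j,i} = \mathrm{ord}(G)\cdot X_j$, which is the asserted formula. The only delicate point is the proximity identity: one must check that the contribution at a proximate blow-up is exactly one copy of the new exceptional divisor (using that $E^{q_j}$ is smooth at any $q_l$ lying on it, so its multiplicity there equals $1$). If one prefers to bypass this bookkeeping, an equally clean route is to prove the lemma directly by induction on $k$ using the block decomposition
\[
P = \begin{pmatrix} P' & 0 \\ v^T & 1 \end{pmatrix}, \qquad P^{-1} = \begin{pmatrix} (P')^{-1} & 0 \\ -v^T (P')^{-1} & 1 \end{pmatrix},
\]
where $P'$ is the $(k-1)\times(k-1)$ proximity matrix of $q_1,\ldots,q_{k-1}$ and $v_j=-1$ iff $q_k \succ q_j$; the inductive step then reduces to a direct termwise comparison using the single-blow-up identities $\pi_k^* \tilde{C}'_G = \tilde{C}_G + m_k E_k$ and $\pi_k^* E'_j = E_j + \epsilon_j E_k$, with $\epsilon_j = 1$ iff $q_k \succ q_j$.
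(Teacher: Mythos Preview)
Your proof is correct and follows essentially the same approach as the paper: both establish that the total transform $e_i$ of the $i$-th exceptional divisor expands in the strict-transform basis as $e_i=\sum_j x_{j,i}E_j$, and then combine this with the iterated blow-up formula $\pi^*C_G=\tilde{C}_G+\sum_i m_i e_i$. The paper proves the identity $e_i=\sum_j x_{j,i}E_j$ by downward induction on $i$, unwinding $PP^{-1}=I$ column by column, whereas you package the same computation as the matrix equation $E=P^T e$ and invert; the content is identical.
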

\begin{proof}
    By Definition \ref{N:coeffexc}, the $i$th column of $P^{-1}$ is denoted by $[x_{1,i},\ldots,x_{k,i}]^T$.
We claim, for any $1\le i\le k-1$,
\begin{align}\label{E:compuexcpull}
    (\pi_k\circ\pi_{k-1}\cdots\cdot\pi_{i+1})^{*}E_i=x_{1,i}E_1+\cdots+x_{k,i}E_k.
\end{align}
For $i=k-1$, we need to show $\pi_k^*E_{k-1}=x_{1,k-1}E_1+\cdots+x_{k-1,k-1}E_{k-1}+x_{k,k-1}E_k$. We know the center $q_k$ is 
always proximate to $q_{k-1}$, so $\pi_k^*E_{k-1}=E_{k-1}+E_{k}$. On the other hand,
 by the definition of $P$ and $P^{-1}$ and by elementary calculations, we get $x_{1,k-1}=\cdots=x_{k-2,k-1}=0$ 
 and $x_{k-1,k-1}=x_{k,k-1}=1$. Hence we proved the base case $i=k-1$. Inductively, suppose for $j\le i\le k-1$, 
 we have 
 \begin{align}\label{E:induction}
    (\pi_k\circ\pi_{k-1}\cdots\cdot\pi_{i+1})^{*}E_i=x_{1,i}E_1+\cdots+x_{k,i}E_k.
 \end{align}
 Let $[p_{1,j-1},\ldots, p_{k,j-1}]^T$ be the $(j-1)$-th column of $P$. Since for $l>j-1$,
 \[q_{l,j-1}=\begin{cases}
    -1,&\textrm{if}~q_{l}\succ q_{j-1};\\
    0,&\textrm{otherwise},
\end{cases}\]
we know 
 \[\pi_l^*E_{j-1}=E_{j-1}-p_{l,j-1}E_j.\]
By the assumptions (\ref{E:induction}), we have 
\begin{align*}
    (\pi_k\circ\cdots\circ\pi_j)^*E_{j-1}=&E_{j-1}+(-x_{j,j}p_{j,j-1})E_j+(-x_{j+1,j}p_{j,j-1}-x_{j+1,j+1}p_{j+1,j-1})E_{j+1}\\
&+\cdots+(-x_{k,j}p_{j,j-1}-\cdots-x_{k,k}p_{k,j-1})E_k\\
=&x_{1,j-1}E_1+\cdots+x_{k,j-1}E_k,
\end{align*}
where the last equality is give by $PP^{-1}=I$.
By induction we proved the claim. Then by the definition of $\mathrm{ord}(G)$, we proved that 
\begin{align*}
    \pi^*C_G=&\tilde{C}_G+\sum_{i=1}^k\mathrm{mult}_{q_i}\tilde{C}_G(\pi_k\circ\pi_{k-1}\cdots\cdot\pi_{i+1})^{*}E_i\\
    =&\tilde{C_G}+(\mathrm{ord}(G)\cdot X_1 )E_1+\cdots+(\mathrm{ord}(G)\cdot X_k)E_k.
\end{align*}
\end{proof}    

Let the standard resolution $\pi$ be the log resolution $\varphi$ in (\ref{E:multiideal}).
Then the multiplier ideal of the pair $(\fC^2,cZ)$, for any $c\in\fQ_+$, is equal to
\[\mathfrak{I}(cZ)=\pi_*\so_Y(K_{Y/\fC^2}-[\pi^*(c Z)]).\]
Since the singularity of $Z$ is isolated at $O$, the ideal sheaf $\mathfrak{I}(\alpha Z)$ is trivial 
away from $O$, hence we are only interested in 
the stalk at $O$. By Lemma \ref{L:7}, for $0<\alpha<1$, the ideal can be written as 
\begin{align}\label{E:multipl}
    \mathfrak{I}(\alpha Z)=\Big{\{ }G\in\fC\{x,y\}~|~G^{(i)}+1+a^{(i)}>\alpha b^{(i)},~\forall~1\le i\le k\Big{\} }.
\end{align}

\begin{definition}\label{D:8}
    Given a germ of an irreducible plane curve singularity $Z$ and fix coordinates $x,y$ such that
     $Z$ is not tangent to $y$-axis. Let $\pi$ be the standard resolution of $Z$.
 For any $G\in\fC\{x,y\}$,  denote by $C_G$ the curve defined by $G$ and we set
 \[\pi^*C_G=\tilde{C_G}+G^{(1)}E_1+\cdots+G^{(k)}E_k,\]
 where $E_i$ is the exceptional divisor of the blow-up $\pi_i$ or its strict transform under any blow-ups $\pi_j$.
  Then we
 define a function $\rho:\fC\{x,y\}\to\fQ_+$ such that
\[\rho(G)=\min_{1\le i\le k}\Big{\{}\frac{G^{(i)}+a^{(i)}+1}{b^{(i)}}\Big{\}}.\]
\end{definition}
 The following lemma is a direct consequence of \cite[Proposition 7.14]{Jar06} and Lemma \ref{L:pullbackG}. 
 \begin{lemma}
     We can simplify the formula for $\rho$ as   
     \begin{align}\label{E:8}
         \rho(G)=\min_{1\le i\le g}\Big{\{}\frac{\mathrm{ord}(G)\cdot X_{\gamma_i}+a^{(\gamma_i)}+1}{b^{(\gamma_i)}}\Big{\}}.
     \end{align}
 \end{lemma}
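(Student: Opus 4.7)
The plan is to unfold both ingredients and read off the conclusion. First, by Lemma \ref{L:pullbackG}, the coefficient $G^{(i)}$ of $E_i$ in $\pi^{*}C_G$ is precisely $\mathrm{ord}(G)\cdot X_i$ for every $1\le i\le k$. Substituting this into Definition \ref{D:8} converts the formula for $\rho(G)$ into
\[
\rho(G)=\min_{1\le i\le k}\Big{\{}\frac{\mathrm{ord}(G)\cdot X_i+a^{(i)}+1}{b^{(i)}}\Big{\}}.
\]
The content of the lemma is then that this minimum, taken over all $k$ centers, coincides with the minimum taken only over the $g$ terminal satellite indices $\gamma_1,\ldots,\gamma_g$.

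Next I would invoke J\"arvilehto's Proposition 7.14, which asserts exactly this kind of reduction: for any nonnegative input vector (playing the role of $\mathrm{ord}(G)$), the quotient built from the row vectors $X_i$ together with the discrepancy data $a^{(i)}$ and the pull-back multiplicity $b^{(i)}$ of $Z$ attains its minimum at a terminal satellite center $q_{\gamma_j}$. The proof of that proposition uses only the explicit shape of the rows $X_i$ recorded in Lemma \ref{L:7} and the proximity relations; in particular, along each ``satellite block'' $\tau_{j-1}<i\le\gamma_j$ and each ``free block'' $\gamma_{j-1}<i\le\tau_{j-1}$ the vectors $X_i$, $a^{(i)}$, $b^{(i)}$ evolve linearly, so the quotient $\phi(i)$ is a discrete affine function whose minimum on each block occurs at one of the endpoints; iterating this between neighbouring blocks pushes the minimum all the way to some $i=\gamma_j$.

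Combining the substitution from Lemma \ref{L:pullbackG} with this monotonicity statement yields
\[
\rho(G)=\min_{1\le i\le g}\Big{\{}\frac{\mathrm{ord}(G)\cdot X_{\gamma_i}+a^{(\gamma_i)}+1}{b^{(\gamma_i)}}\Big{\}},
\]
which is the required identity.

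The main obstacle is purely notational: J\"arvilehto phrases Proposition 7.14 in the language of simple complete ideals and point bases, with inputs given by the valuation data of a divisor, whereas here the input is $\mathrm{ord}(G)$ for an arbitrary element $G\in\fC\{x,y\}$. One therefore needs to check that $\mathrm{ord}(G)$ (or equivalently the coefficient vector of $\pi^{*}C_G$ along the $E_i$) satisfies the hypotheses of that proposition -- which it does, since the coefficients of $\pi^{*}C_G$ relative to the exceptional divisors are, by Lemma \ref{L:pullbackG}, precisely the entries of the $P^{-1}$-transform of $\mathrm{ord}(G)$, exactly the setup in which J\"arvilehto's statement applies. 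Once the dictionary is set up, no further calculation is needed.
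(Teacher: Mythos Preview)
Your proposal is correct and follows exactly the approach the paper takes: the paper states that the lemma ``is a direct consequence of \cite[Proposition 7.14]{Jar06} and Lemma \ref{L:pullbackG},'' and you have spelled out precisely those two steps (the substitution $G^{(i)}=\mathrm{ord}(G)\cdot X_i$ from Lemma \ref{L:pullbackG}, followed by the reduction of the minimum to the terminal satellite indices via J\"arvilehto's result). Your added paragraph on the notational dictionary is a helpful elaboration, but the underlying argument is the same.
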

\subsection{The position of points and standard factors}
Fix a germ of an irreducible plane curve $O\in Z$ and fix local coordinates $x,y$. 
The following result indicates necessary and sufficient conditions on the Puiseux series of an irreducible germ $Z'$
 for it to go through some of the centers of blow-ups in the standard 
resolution of $Z$.
 \begin{proposition}\cite[Propositions 5.7.1, 5.7.3, 5.7.5]{Cas00}\label{P:3}
    Fix a germ of an irreducible plane curve singularity $O\in Z$ and local coordinates $x,y$ at $O$ such that the Puiseux series of $Z$ are modified and $Z$ is not tangent to 
    the $y$-axis. Fix one of its Puiseux series $S$ and the characteristic sequence is $(n;m_1,\ldots,m_g)$.
    Let $d_i=\mathrm{gcd}(n,m_1,\ldots,m_i)$. Denote by $\pi$ the standard resolution of $Z$ and by $q_1,\ldots, q_k$ centers of blow-ups consisting of $\pi$. 
    Use the same notations as in Notation \ref{N:coeffexc}. Consider another irreducible germ $Z'$ and 
    denote by $q_1',q_2',\ldots,q_{k'}'$ its centers of blow-ups in its standard resolution. 

    \begin{itemize}
        \item[(a)] For a free point $q_j$ with $\gamma_i<j\le \tau_i$ for some $0\le i\le g-1$. $q_j'=q_j$ if and only if 
        $Z'$ has a Puiseux series $S'$ such that the partial sums
        \[[S']_{\le \frac{m_{i-1}+(j-\gamma_i-1)d_{i-1}}{n}}=[S]_{\le \frac{m_{i-1}+(j-\gamma_i-1)d_{i-1}}{n}}. \]
        Furthermore, there is a projective absolute coordinate in the first neighborhood of $q_j$ such that the satellite point (the point 
        on the $y$-axis if $q_j=O$) has coordinate $\infty$ and, for any $a\in \fC$, $Z'$ goes through the point of coordinate $a$ if and only if 
        \[S'=[S]_{\le \frac{m_{i-1}+(j-\gamma_i-1)d_{i-1}}{n}}+ax^{\frac{m_{i-1}+(j-\gamma_i)d_{i-1}}{n}}.\]
        \item[(b)] The center $q_{\gamma_i}'=q_{\gamma_i}$ for some $i$ and has $q_{\gamma_i+1}'$ being a free point of $Z'$
        if and only if $Z'$ has a Puiseux series $S'$ such that
        \[S'=[S]_{<\frac{m_i}{n}}+a x^{\frac{m_i}{n}}+\cdots\]
        for some $a\in \fC-\{0\}$. Moreover, one may choose an absolute projective coordinate $z$ in the first neighborhood of $q_{\gamma_i}$ 
         so that, for any $a\neq 0$, $Z'$ goes through the point of coordinate $z$ in the first neighborhood of $q_{\gamma_i}$
        if and only if $a^{\frac{d_{i-1}}{d_i}}=z$.
        \item[(c)] Fix a satellite point $q_j$ with $\tau_i<j\le \gamma_{i+1}$ for some $0\le i\le g-1$. More precisely, either there exists $1\le t<k_{i+1}$ such that
        \[j-\tau_i=h_{i+1,1}+\cdots+h_{i+1,t-1}+r\]
        where $2\le r\le h_{i+1,t}+1$ or $t=k_{i+1}$ and 
        $2\le r=j+h_{i+1,t}-\gamma_{i+1}\le h_{i+1,t}$.
        
        Then $q_{j}'=q_j$ if and only if $Z'$ has a Puiseux series $S'$ of the form
        \[S'=[S]_{<\frac{m_{i+1}}{n}}+bx^{\frac{m'}{n'}}+\cdots\]
        where $n'=\nu(S')=\frac{dn}{d_i}$ for some $d\in\rN$ is the polydromy order of $S'$ and $m'$ satisfies the following condition:
        
        If we write $\frac{m'}{d}$ as a continued fraction in the form
\[\frac{m'}{d}=\frac{m_i}{d_i}+h_{i+1,0}'+\cfrac{1}{h'_{i+1,1}+\cfrac{1}{\ddots+\frac{1}{h_{i+1,t'}'}}},\]
then either $h_{i+1,0}'=h_{i+1,0},\ldots, h_{i+1,t-1}'=h_{i+1,t-1}$, $h_{i+1,t}'\ge r-1$ if $t'>t$, or 
$h_{i+1,0}'=h_{i+1,0},\ldots, h_{i+1,t-1}'=h_{i+1,t-1}$, $h_{i+1,t}'\ge r$ if $t'=t$.

    \end{itemize}
\end{proposition}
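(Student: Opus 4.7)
The three parts share a common structure: each specifies what data in a Puiseux series $S'$ of $Z'$ forces $Z'$ to pass through a specified infinitely near point $q_j$ of the standard resolution of $Z$. The natural plan is to induct on $j$ and track how the successive blow-ups act on both $Z$ and $Z'$ via the Newton-Puiseux algorithm reviewed in Section \ref{S:NPalgorithm}. Each blow-up $\pi_{j+1}$ at a center $q_j$ corresponds, in a free chart, to the substitution $x = x_1$, $y = x_1(a+y_1)$ for an appropriate $a$ (which peels off one leading monomial of the Puiseux series) or, in a satellite chart, to a substitution $x = x_1^n(a+y_1)^{\!?}$, $y = x_1^m\cdots$ encoded by the continued-fraction data of the slope of the relevant side of $N(f)$. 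By Lemma \ref{L:Puial} every irreducible germ is recovered from any one of its Puiseux series via this algorithm, so passing through $q_j$ is equivalent to the Newton-Puiseux trajectory of $S'$ agreeing with that of $S$ for the first $j$ steps.

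For part (a), after applying the $j-\gamma_i$ blow-ups that produce the free chain above $q_{\gamma_i}$, the strict transform of $Z$ in the current chart has a Puiseux series obtained from $S$ by deleting the partial sum $[S]_{\le (m_{i-1}+(j-\gamma_i-1)d_{i-1})/n}$ and rescaling by a suitable power of the current $x$-coordinate; the point $q_j$ sits on the exceptional divisor of the previous blow-up, and its affine coordinate on that divisor is precisely the next coefficient of the Puiseux series. Thus $Z'$ passes through $q_j$ iff its Puiseux series $S'$ agrees with $S$ up to the stated order, and the choice of absolute coordinate on the first neighborhood of $q_j$ then distinguishes which free (or satellite) point in that neighborhood the curve $Z'$ meets, giving the coefficient $a$ of $x^{(m_{i-1}+(j-\gamma_i)d_{i-1})/n}$.

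For part (b), the point $q_{\gamma_i}$ is the terminal satellite point before the $(i{+}1)$-th characteristic exponent is consumed. At this step, the relevant side $\Gamma$ of the Newton polygon has slope dictated by $m_i/d_{i-1}$ (after the earlier substitutions), and the associated polynomial $F_\Gamma$ has the characteristic coefficient as a root, whose multiplicity equals $d_{i-1}/d_i$ since $Z$ is irreducible. A projective coordinate on the exceptional fiber of the blow-up at $q_{\gamma_i}$ identifies with roots of $F_\Gamma$ up to the $d_{i-1}/d_i$-th power ambiguity, producing the condition $a^{d_{i-1}/d_i}=z$. The freeness of $q_{\gamma_i+1}'$ forces $a \neq 0$ (otherwise the Newton-Puiseux trajectory of $Z'$ would stay in the satellite direction), and this pins down the leading characteristic term of $S'$.

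For part (c), within the satellite block $\tau_i < j \le \gamma_{i+1}$ the sequence of blow-ups performs the Euclidean algorithm on $(m_{i+1}-m_i,\, d_i)$, whose quotients are exactly $h_{i+1,0},\ldots,h_{i+1,k_{i+1}}$. Passing through $q_j$ is then equivalent to the slope of the next side of the Newton polygon of $Z'$ having a continued fraction whose initial segment matches that of $Z$ up to the depth corresponding to $j$, and the two cases $t' > t$ versus $t' = t$ correspond to whether the strict transform of $Z'$ meets the newly created exceptional divisor or the previous one at $q_j$. Unwinding this matching yields the stated bounds on $h_{i+1,t}'$. The main obstacle I expect is precisely in this part: one must carefully match the transition between old and new exceptional components across each Euclidean step so that the inequality $h_{i+1,t}'\ge r-1$ versus $h_{i+1,t}'\ge r$ appears with the correct strictness, and this bookkeeping of which exceptional divisor is currently the \emph{satellite} direction is what forces the case split between $t' > t$ and $t' = t$ in the statement.
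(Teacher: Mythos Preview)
The paper does not give its own proof of this proposition: it is quoted verbatim from Casas-Alvero's book as \cite[Propositions 5.7.1, 5.7.3, 5.7.5]{Cas00}, and no argument appears in the paper itself. So there is nothing to compare your proposal against in the paper.

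That said, your outline is the standard one and is essentially what Casas-Alvero does in the cited sections: one inducts on the sequence of blow-ups, identifying each step with a Newton--Puiseux substitution, so that free points correspond to stripping off successive coefficients of the Puiseux series while satellite points correspond to steps of the Euclidean algorithm on the current characteristic pair, encoded by the continued-fraction expansion of the slope of the Newton polygon side. Your identification of the delicate point in part~(c)---tracking which of the two exceptional components is the ``satellite direction'' at each Euclidean step, and hence whether the bound on $h_{i+1,t}'$ is $r$ or $r-1$---is accurate; this is exactly where the argument requires care, and where a sketch at this level of detail would need to be filled in before it counts as a proof. What you have written is a correct plan rather than a proof, but since the paper itself only cites the result, there is no discrepancy to flag.
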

\begin{corollary}
Let $Z_1$, $Z_2$ be two irreducible plane curve singularities and fix coordinate $x,y$ such that 
 $Z_1$ and $Z_2$ are not tangent to $y$-axis in coordinates $x,y$.
  Assume $Z_1$ and $Z_2$ have the same characteristic sequence $(n;m_1,\ldots,m_g)$. 
  Then $Z_1$ and $Z_2$ have the same standard resolution if and only if there are a $y$-root $S_1$ of $Z_1$ and a $y$-root $S_2$ of $Z_2$
with $[S_1]_{<\frac{m_g}{n}}=[S_2]_{<\frac{m_g}{n}}$. 
\end{corollary}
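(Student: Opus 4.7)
The plan is to iterate Proposition \ref{P:3} over every center of the standard resolution. Because $Z_1$ and $Z_2$ share the characteristic sequence, Theorem \ref{T:ECT} gives them identical invariants $d_i, h_{i,j}, \gamma_i, \tau_i$ and an identical number of blow-ups $k=\gamma_g$, so ``having the same standard resolution'' is the condition that the sequences of centers $(q_1,\ldots,q_k)$ coincide position by position.

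For the direction $(\Leftarrow)$, assume $[S_1]_{<m_g/n}=[S_2]_{<m_g/n}$. For each $j\in\{1,\ldots,\gamma_g\}$ I would invoke the appropriate clause of Proposition \ref{P:3}: part (b) when $j=\gamma_i$ for some $1\le i\le g$, part (a) when $\gamma_i<j\le\tau_i$, and part (c) when $\tau_i<j<\gamma_{i+1}$. In each clause the partial-sum identity required is of the form $[S_2]_{\le\lambda}=[S_1]_{\le\lambda}$ for some $\lambda<m_g/n$, and is therefore a prefix of the hypothesis; the auxiliary conditions in parts (b) and (c) (non-vanishing of the coefficient of $x^{m_i/n}$, and the continued-fraction/polydromy conditions) are automatic because $S_2$ shares the characteristic $(n;m_1,\ldots,m_g)$. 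Hence $q_j^{(1)}=q_j^{(2)}$ for every $j$.

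For the direction $(\Rightarrow)$, fix a Puiseux series $S_1$ of $Z_1$ and construct a Puiseux series of $Z_2$ that matches it strictly below $m_g/n$ by induction on $i=1,\ldots,g$. Part (b) applied to $q_{\gamma_1}^{(1)}=q_{\gamma_1}^{(2)}$ yields a Puiseux series $T$ of $Z_2$ with $T=[S_1]_{<m_1/n}+b_1 x^{m_1/n}+\cdots$, and the projective-coordinate assertion of (b) combined with $q_{\gamma_1+1}^{(1)}=q_{\gamma_1+1}^{(2)}$ forces $b_1^{n/d_1}=a_1^{n/d_1}$, where $a_1$ is the coefficient of $x^{m_1/n}$ in $S_1$. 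Because $\gcd(n,m_1)=d_1$, the homomorphism $\epsilon\mapsto\epsilon^{m_1}$ from $n$-th roots of unity surjects onto the $(n/d_1)$-th roots of unity, so after conjugating $T$ by a suitable $\epsilon$ one may assume $b_1=a_1$. Coincidence of the remaining centers in $(\gamma_1,\gamma_2]$ then pins down, via the iff assertions of parts (a) and (c), the coefficients of $T$ at each exponent $\eta/n$ with $m_1<\eta<m_2$, $\eta\in(d_1)$. Iterating this matching-plus-conjugation procedure at each of the $g$ characteristic exponents produces the required $S_2$.

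The main obstacle will be the Galois bookkeeping in the forward direction. At the $i$-th step the residual conjugation freedom is the subgroup $\mu_{d_{i-1}}$ of $n$-th roots of unity fixing $[S_1]_{\le m_{i-1}/n}$, and one must verify both that the restriction of $\epsilon\mapsto\epsilon^{m_i}$ to $\mu_{d_{i-1}}$ surjects onto the $(d_{i-1}/d_i)$-th roots of unity (so that the next characteristic coefficient can be aligned) and that every intermediate coefficient not moved by this residual action has already been pinned down correctly by the iff assertions of parts (a) and (c) applied to the centers in $(\gamma_{i-1},\gamma_i]$. Both reduce to elementary arithmetic with $d_i=\gcd(n,m_1,\ldots,m_i)$ together with the statements of Proposition \ref{P:3}.
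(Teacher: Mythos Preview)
Your proposal is correct and is precisely the argument the paper has in mind: the corollary is stated immediately after Proposition~\ref{P:3} with no proof, so the intended justification is exactly the center-by-center application of parts (a), (b), (c) that you outline. Your handling of the $(\Rightarrow)$ direction---in particular the observation that the residual conjugation freedom at stage $i$ is $\mu_{d_{i-1}}\subset\mu_n$ and that $\epsilon\mapsto\epsilon^{m_i}$ maps this onto $\mu_{d_{i-1}/d_i}$ because $\gcd(d_{i-1},m_i)=d_i$---is the only point requiring real work, and you have identified it correctly.
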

    \begin{lemma}\cite[Lemma 8.5]{Jar06}   \label{L:eucalg}
    If $a\le b$ are positive integers and $\textrm{gcd}(a,b)=1$, then for any positive integer $u$ there exists 
    positive integers $s$ and $t$ such that $sa+tb=ab+u$.
\end{lemma}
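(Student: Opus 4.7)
The plan is to combine Bézout's identity with the standard parametrisation of the integer solutions of a linear Diophantine equation. Since $\gcd(a,b)=1$, for any integer $N$ there exist $s_0,t_0\in\rZ$ satisfying $s_0 a + t_0 b = N$, and the complete integer solution set is then $\{(s_0+kb,\, t_0-ka) : k\in\rZ\}$. I will apply this with $N = ab+u$ and then choose $k$ so that both coordinates become strictly positive.

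First I would fix any integer pair $(s_0,t_0)$ with $s_0 a + t_0 b = ab+u$. Setting $(s,t) = (s_0+kb,\, t_0-ka)$, the requirement $s,t>0$ amounts to
\[
-\frac{s_0}{b} \;<\; k \;<\; \frac{t_0}{a},
\]
so the problem reduces to showing that this open interval contains an integer. A direct computation gives its length as
\[
\frac{t_0}{a}+\frac{s_0}{b} \;=\; \frac{t_0 b + s_0 a}{ab} \;=\; \frac{ab+u}{ab} \;=\; 1+\frac{u}{ab}\;>\;1,
\]
since $u$ is a positive integer. Any open interval of length greater than one contains an integer, so I pick such a $k$, and the corresponding $(s,t)$ yields the desired representation $sa+tb=ab+u$ with $s,t>0$.

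The argument is essentially routine once it is set up this way; the only piece of bookkeeping I want to be careful about is strictness. I need $s,t$ strictly positive (not merely non-negative), which is why the interval is taken open, and the strict inequality $u>0$ is what guarantees the length exceeds $1$ strictly rather than just equals $1$. The hypothesis $a\le b$ is in fact not used at any point in this argument, so I would mention at the end that the statement is symmetric in $a$ and $b$ and the ordering assumption can be dropped without loss.
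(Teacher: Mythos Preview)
Your argument is correct. The paper does not actually prove this lemma; it is quoted verbatim from \cite[Lemma~8.5]{Jar06} and used as a black box in the construction of the standard factors, so there is no ``paper's own proof'' to compare against. Your approach via B\'ezout and the parametrisation $(s_0+kb,\,t_0-ka)$ of the full solution set, together with the length computation $(ab+u)/ab>1$, is the standard and cleanest route. Your remark that the hypothesis $a\le b$ plays no role is also correct; the statement and argument are symmetric in $a$ and $b$.
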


\begin{propositiondef}\label{D:standardfactors}
    Let $S$ be a modified Puiseux series of an irreducible germ $Z$  with the characteristic sequence $(n;m_1,\ldots,m_g)$.
    Denote by
    \[ d_i=\mathrm{gcd}(m_i,\ldots,m_1,n), ~\forall ~i=1,\ldots,g \]
   and $h_{i,0}$ is given in (\ref{E:3}).
    Then there exist a set of 
        polynomials 
        \[H=\{H_{i,j}~|~1\le i\le g-1, 0\le j\le h_{i+1,0}\}\]  
        with \[H_i:=\sum_{j=0}^{h_{i+1,0}}H_{i,j}\]
         such that, for any $1\le i\le g-1$, $0\le j\le h_{i+1,0}$, or $i=g,j=0$, the polynomial
        \[F_{i,j}:=\Big{(}\big{(}(y^{\frac{n}{d_1}}+H_1)^{\frac{d_1}{d_2}}+H_2\big{)}^{\frac{d_2}{d_3}}+
        \cdots+H_{i-1}\Big{)}^{\frac{d_{i-1}}{d_{i}}}+H_{i,0}+H_{i,1}+\cdots+H_{i,j}\]
        has a $y$-root $[S_i]_{\le \frac{m_i+jd_i}{n}}=[S]_{\le \frac{m_i+jd_i}{n}}$ and the polydromy order $\nu(S_i)=\frac{n}{d_i}$, and $H_{i,j}$ is a linear 
    combination of $x^\alpha y^\beta F_{1,h_{2,0}}^{\delta_1}\cdots F_{i-1,h_{i,0}}^{\delta_{i-1}}$ satisfying, for any $1\le l\le g$,
    \[\mathrm{ord}(x^\alpha y^\beta F_{1,h_{2,0}}^{\delta_1}\cdots F_{i-1,h_{i,0}}^{\delta_{i-1}})\cdot X_{\gamma_l}\ge X_{\gamma_i+j}\cdot X_{\gamma_l}.\]
    For $1\le i\le g-1$, denote by $F_i=F_{i,h_{i+1,0}}$. Then we call the set $F=\{F_1,\ldots, F_{g-1}\}$ 
    a \emph{set of standard factors} of $S$. 
\end{propositiondef}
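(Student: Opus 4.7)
The plan is to build the standard factors by a double induction: an outer induction on the level $i$ (from $1$ to $g$), and within each level an inner induction on $j$ (from $0$ to $h_{i+1,0}$). At each stage $(i,j)$, the goal is to produce $H_{i,j}$ so that $F_{i,j}$, as defined in the statement, is an irreducible polynomial whose Puiseux expansion agrees with $[S]_{\le (m_i+jd_i)/n}$ up to the prescribed precision, has polydromy order $n/d_i$, and such that every monomial of $H_{i,j}$ satisfies the proximity-valuation inequality on the $X_{\gamma_l}$.

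For the base case $i=1,\, j=0$ the idea is to reverse-engineer the first step of the Newton--Puiseux algorithm. Starting from $y^{n/d_1}$, I add a polynomial $H_{1,0}\in\fC\{x,y\}$ so that the Newton polygon of $y^{n/d_1}+H_{1,0}$ has a single compact side joining $(0,n/d_1)$ and $(m_1/d_1,0)$ (both are integers since $d_1=\gcd(n,m_1)$), whose associated characteristic polynomial has $a_1$ as a root. This forces the Newton--Puiseux algorithm, by Lemma \ref{L:Puial}, to produce the truncation $[S]_{\le m_1/n}=a_1x^{m_1/n}$ as a $y$-root, with polydromy order $n/d_1$. The required monomial is essentially $(-a_1)^{n/d_1}x^{m_1/d_1}$, which is a monomial in $x,y$ of order matching the Puiseux shift, so the order condition at this stage is trivially verified.

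The inner inductive step is the heart of the argument. Assume $F_{i,j-1}$ has been constructed with the stated properties. I want to choose $H_{i,j}$ as a linear combination of monomials $x^\alpha y^\beta F_1^{\delta_1}\cdots F_{i-1}^{\delta_{i-1}}$ such that $F_{i,j}:=F_{i,j-1}+H_{i,j}$ acquires one additional Puiseux term $c\,x^{(m_i+jd_i)/n}$ in its $y$-root. Running the Newton--Puiseux algorithm on $F_{i,j-1}$ through the $i$-th characteristic step, the effect of adding such a monomial is to perturb the characteristic polynomial at a specific edge of a later Newton polygon. The monomial must have the same valuation as $F_{i,j-1}$ at every infinitely near point $q_1,\ldots,q_{\gamma_i+j}$ of the standard resolution of $Z$, and this is exactly the numerical content of the required inequality $\mathrm{ord}(\cdot)\cdot X_{\gamma_l}\ge X_{\gamma_i+j}\cdot X_{\gamma_l}$. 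Existence of such a monomial is guaranteed by Lemma \ref{L:eucalg}: the coprimality built into the characteristic sequence (via the $d_i$) allows the target valuation to be written as a nonnegative combination of the valuations of $x$, $y$, and the $F_\ell$, which encode the exponents appearing in the Puiseux truncation through Proposition \ref{P:3}.

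The transition from level $i$ to level $i+1$ is, by comparison, a formality: once $F_i=F_{i,h_{i+1,0}}$ is constructed, raising it to the $d_i/d_{i+1}$ power produces a polynomial whose roots are the $n/d_{i+1}$ conjugate Puiseux series obtained by multiplying those of $F_i$ by $(d_i/d_{i+1})$-th roots of unity, so the polydromy order becomes $n/d_{i+1}$; then $H_{i+1,0}$ is chosen exactly as in the base case to introduce the characteristic term $a_{i+1}x^{m_{i+1}/n}$. I expect the main obstacle to be the simultaneous verification of the proximity inequality for every $l$ (not merely $l=i$) while the Puiseux truncation is being enlarged by precisely one term; the way around it is to use Proposition \ref{P:3}(a)--(c) to identify exactly which centers $q_r$ the candidate monomial $x^\alpha y^\beta F_1^{\delta_1}\cdots F_{i-1}^{\delta_{i-1}}$ passes through, and then to exploit the block-structure of the matrix $P^{-1}$ exhibited in Lemma \ref{L:7}, which reduces the $g$ inequalities to a single arithmetic condition at the critical index $\gamma_i+j$ accessible to Lemma \ref{L:eucalg}.
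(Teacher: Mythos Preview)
Your plan matches the paper's proof in structure and in its key tools: a double induction on $(i,j)$, the Newton--Puiseux algorithm run in reverse to track how each $H_{i,j}$ shifts the next edge, and Lemma~\ref{L:eucalg} to manufacture monomials with the prescribed valuation. The base case and the inner induction are handled exactly as the paper does.

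There is one genuine misjudgment. You call the transition from level $i$ to level $i+1$ ``by comparison, a formality''; in the paper this is the most delicate step, not the easiest. First, raising $F_i$ to the power $d_i/d_{i+1}$ does not give you $n/d_{i+1}$ distinct conjugate roots---it gives the same $n/d_i$ roots of $F_i$, each with multiplicity $d_i/d_{i+1}$; the polydromy order only jumps to $n/d_{i+1}$ \emph{after} $H_{i+1,0}$ introduces the new characteristic exponent $m_{i+1}/n$. Second, producing $H_{i+1,0}$ is not a single application of Lemma~\ref{L:eucalg}: the target valuation $X_{\gamma_{i+1}}\cdot X_{\gamma_l}$ must now be realized as a nonnegative combination of the valuations of $x$, $y$, \emph{and} $F_1,\ldots,F_i$, and Lemma~\ref{L:eucalg} only handles two coprime integers at a time. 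The paper resolves this (already for $H_{2,0}$) by splitting $H_{2,0}=H_{2,0}^{(1)}+H_{2,0}^{(2)}$ and chaining two separate applications of Lemma~\ref{L:eucalg} through an intermediate identity (the equations labeled (\ref{E:Cor1}) and (\ref{E:Cor2})), first to hit the correct $(x,y)$-valuation and then to absorb the $F_1$-contribution. Your plan needs to acknowledge and sketch this iterated arithmetic; without it the existence of $H_{i+1,0}$ with the stated order bounds for \emph{all} $l$ is not established.
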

\begin{remark}
This set of standard factors coincides with a special choice of a set of the maximal contact elements of the germ $Z$ (see \cite[\S 2.4]{AAB17}) and approximate roots of $f$ (see \cite[Chapter 2 \S 1]{Dur18}). These polynomials have been defined classically in the literature years ago (see for example \cite[\S 1]{AM73a}, \cite[\S 1]{AM73b}).  Following the construction in the proof below, we provide an algorithm to choose these standard factors, which may be more effective to use.

\end{remark}
\begin{proof}
 By Lemma \ref{L:eucalg}, there exist positive integers $s,t$  such that $s\frac{n}{d_1}+t\frac{m_1}{d_1}=\frac{nm_1}{d_1^2}+1$. 
  By running the Newton-Puiseux algorithm,
         we get \[y^{\frac{n}{d_1}}-a_1^{\frac{n}{d_1}}x^{\frac{m_1}{d_1}}-b_{m_1+d_1}\lambda_1 x^s(y/a_1)^t\]
         has a $y$-root with a partial sum \[a_1x^{\frac{m_1}{n}}+b_{m_1+d_1}x^{\frac{m_1+d_1}{n}}.\]
          Therefore
         \[H_{1,0}=-a_1^{\frac{n}{d_1}}x^{\frac{m_1}{d_1}}~\textrm{and}~H_{1,1}=-b_{m_1+d_1}\lambda_1 x^s(y/a_1)^t\]
          satisfied the desired conditions.
    Inductively, for $1\le q\le h_{2,0}$, suppose we have constructed 
    \[F_{1,q}=y^{\frac{n}{d_1}}-a_1^{\frac{n}{d_1}}x^{\frac{m_1}{d_1}}+H_{1,1}+\cdots+H_{1,q}\] which 
    has a $y$-root with a partial sum $[S]_{\le \frac{m_1+qd_1}{n}}$, and suppose that after taking coordinate changes 
       \[\begin{cases}
        x=x_{1}^{n/d_1}=x_2^{n/d_1}=\cdots=x_{q+1}^{n/d_1};\\
        y=x_1^{m_1/d_1}(y_1+a_1)=\cdots=x_{q+1}^{m_1/d_1}(x_{q+1}^qy_{q+1}+b_{m_1+qd_1}x_{q+1}^q+\cdots+b_{m_1+d_1}x_{q+1}+a_1),\\
    \end{cases} \]
     we can write
    \[y^{\frac{n}{d_1}}-a_1^{\frac{n}{d_1}}x^{\frac{m_1}{d_1}}+H_{1,1}+\cdots+H_{1,q}=x_{q+1}^{\frac{nm_1}{d_1^2}+
    q}(\lambda_1y_{q+1}+\sum_{\alpha+\beta>1}c^{(q)}_{\alpha,\beta}x_{q+1}^\alpha y_{q+1}^\beta).\]
       By Lemma \ref{L:eucalg} we can construct $H_{1,q+1}$, a linear combination of $x^{s}y^t$ with suitable coefficients such that 
      , after taking coordinate changes,
       \[\begin{cases}
           x_{q+1}=x_{q+2},\\
           y_{q+1}=x_{q+1}(y_{q+2}+b_{m_1+(q+1)d_1}),\\
       \end{cases}\]
we have $F_{1,q+1}=F_{1,q}+H_{1,q+1}$ is of the form 
\[F_{1,q+1}=x_{q+2}^{\frac{nm_1}{d_1^2}+q+1}(\lambda_1y_{q+2}+\sum_{\alpha+\beta>1}c_{\alpha,\beta}^{(q+1)}x_{q+2}^\alpha y_{q+2}^{\beta}).\]
By the Newton-Puiseux Algorithm and by Lemma \ref{L:singleconj}, we know that $F_{1,q+1}$ has a $y$-root with a partial sum
      \[a_1x^{\frac{m_1}{n}}+b_{m_1+d_1}x^{\frac{m_1+d_1}{n}}+\cdots+b_{m_1+(q+1)d_1}x^{\frac{m_1+(q+1)d_1}{n}}.\]
      By induction, we proved that there exists polynomials $H_{1,0},\ldots, H_{1,h_{2,0}}$ satisfying the desired conditions. Notice that 
      $\mathrm{ord}(x^sy^t)\cdot X_{\gamma_l}=\frac{n}{d_l}s+\frac{m_1}{d_l}t$, so we obtain that $H_{1,j}$ is a linear combination of $x^sy^t$
      satisfying, for any $1\le l\le g$, 
      \[\mathrm{ord}(x^sy^t)\cdot X_{\gamma_l}\ge \frac{d_1}{d_l}(\frac{nm_1}{d_1^2}+j)\ge X_{\gamma_1+j}\cdot X_{\gamma_l}.\]
     Now we want to show there exists $H_{2,0}$ 
        such that $F_1^{\frac{d_1}{d_{2}}}+H_{2,0}$ has a $y$-root with a partial sum
       \[a_1x^{\frac{m_1}{n}}+
            \sum_{\substack{\eta\in(d_1)\\m_1<\eta<m_2}}b_\eta x^{\frac{\eta}{n}}+a_2x^{\frac{m_2}{n}},\]
            and $H_{2,0}$ is a linear combination of $x^\alpha y^\beta F_1^{\delta_1}$ satisfying, 
\[\mathrm{ord}(x^\alpha y^\beta F_1^{\delta_1})\cdot X_{\gamma_l}\ge X_{\gamma_2}\cdot X_{\gamma_l}.\]
           By the previous arguments, we have 
           \[F_1=y^{\frac{n}{d_1}}+H_1=x_{h_{2,0}+1}^{\frac{nm_1}{d_1^2}+h_{2,0}}
           (\lambda_1y_{h_{2,0}+1}+\sum_{\alpha+\beta>1}c^{(h_{2,0})}_{\alpha,\beta}x_{h_{2,0}+1}^\alpha y_{h_{2,0}+1}^\beta).\] 
    By Lemma \ref{L:eucalg}, for any positive integer $w$, there exists positive integers $s_i, t_i$ such that 
    $s_i\frac{n}{d_1}+t_i\frac{m_1}{d_1}=\frac{nm_1}{d_1d_2}+h_{2,0}\frac{d_1}{d_2}+w$ (since $\frac{nm_1}{d_1d_2}>\frac{nm_1}{d_1^2}$).
    So, by setting $r=m_2-m_1-h_{2,0}d_1$, we can construct a linear combination $H_{2,0}^{(1)}$ of $x^sy^t$ such that 
    \[F_1^{\frac{d_1}{d_2}}+H_{2,0}^{(1)}=x_{h_{2,0}+1}^{\frac{nm_1+h_{2,0}d_1^2}{d_1d_2}}(\lambda_1^{\frac{d_1}{d_2}}y_{h_{2,0}+1}^\frac{d_1}
    {d_2}-a_2\lambda_1^{\frac{d_1}{d_2}}x_{h_{2,0}+1}^\frac{r}{d_2}+\sum_{\frac{d_1}{d_2}\alpha+\frac{r}{d_2}\beta>\frac{d_1r}{d_2^2}}
    c_{\alpha,\beta}^{(h_{2,0}+1)}x^\alpha y^\beta).\]
    For any positive integer $w$, by Lemma \ref{L:eucalg}, there exist positive integers $u,v$ such that 
    \[u\frac{m_2-m_1}{d_2}+v\frac{d_1}{d_2}=\frac{m_2-m_1}{d_2}\cdot\frac{d_1}{d_2}+t.\]
   Choose some $0<u<\frac{d_1}{d_2}$.
    Then 
    \begin{align}\label{E:Cor1}
    u(\frac{nm_1}{d_1d_2}+\frac{m_2-m_1}{d_2})+(\frac{d_1}{d_2}-u-1)\frac{nm_1}{d_1d_2}+\frac{nm_1}{d_1d_2}+v\frac{d_1}{d_2}=\frac{nm_1+(m_2-m_1)d_1}{d_2^2}+w.   
\end{align}
    By Lemma \ref{L:eucalg} again, there exist positive integers $s,t$ such that 
    \begin{align}\label{E:Cor2}
        s\frac{n}{d_2}+t\frac{m_1}{d_2}=\frac{nm_1}{d_1d_2}+v\frac{d_1}{d_2}.
    \end{align}
    Combining (\ref{E:Cor1}) and (\ref{E:Cor2}) we obtain
    \[u(\frac{nm_1}{d_1d_2}+\frac{m_2-m_1}{d_2})+\Big{[}(\frac{d_1}{d_2}-u-1)\frac{m_1}{d_1}+s\Big{]}\frac{n}{d_2}+t\frac{m_1}{d_2}=\frac{nm_1+(m_2-m_1)d_1}{d_2^2}+w.\]
    Set $s'=(\frac{d_1}{d_2}-u-1)\frac{m_1}{d_1}+s$, then we obtain positive integers $s',t,u$ such that 
    \[\frac{n}{d_2}s'+\frac{m_1}{d_2}t+\frac{nm_1+d_1(m_2-m_1)}{d_1d_2}u=\frac{nm_1+d_1(m_2-m_1)}{d_2^2}+w.\]
    Thus we can construct $H_{2,0}^{(2)}$,  a linear combination of $x^\alpha y^\beta F_1^{\delta_1}$ such that, 
    after coordinate changes,
    \[\begin{cases}
        x_{h_{2,0}+1}=x_{2,0}^{\frac{d_1}{d_2}},\\
        y_{h_{2,0}+1}=x_{2,0}^{\frac{m_2-m_1-h_{2,0}d_1}{d_2}}(y_{2,0}+a_2)
    \end{cases}\]
    wer have
    \[F_1^{\frac{d_1}{d_2}}+H_{2,0}^{(1)}+H_{2,0}^{(2)}=x_{2,0}^{\frac{nm_1+(m_2-m_1)d_1}{d_2^2}}(\lambda_2y_{2,0}+
    \sum_{\alpha+\beta>1}c^{(2,0)}_{\alpha,\beta}x_{2,0}^\alpha y_{2,0}^\beta).\]
    By elementary calculations, we get, for $1\le l\le g$, 
    \[\mathrm{ord}(x^\alpha y^\beta F_{1,h_{2,0}}^{\delta_1})\cdot X_{\gamma_l}\ge X_{\gamma_2}\cdot X_{\gamma_l}.\]
    Following very similar arguments as above, we showed that there exists $F_1,\ldots, F_{g-1}$ that satisfied the desired conditions.
\end{proof}
\begin{example}
    Let $Z=(y^6-6x^2y^5+9x^4y^4-2x^5y^3+6x^7y^2+x^{10}-9x^{11}=0)$ be the germ of a plane curve. After running the Newton-Puiseux Algorithm, we know that $Z$ is irreducible.
     The characteristic sequence 
    of $Z$ is $(n;m_1,m_2)=(6,10,13)$ and it has a $y$-root $S$ with 
    \[[S]_{\le\frac{13}{6}}=x^{\frac{5}{3}}+x^2+x^{\frac{13}{6}}.\]
    We may construct a set of standard factors of $S$ to be 
    \[F=\{F_1=y^3-x^5-3x^2y^2\}.\]
    The choice of standard factors is not unique. For example, we can choose $F_1=y^3-x^5-3x^2y^2+ax^sy^t$ with $3s+5t>16$ and $a\in\fC$.
\end{example}

To compute $\mathrm{ord}(x^{p_x}y^{p_0}F_1^{p_1}\cdots F_{g-1}^{p_{g-1}})\cdot X_{\gamma_l}$ and $\rho(x^{p_x}y^{p_0}F_1^{p_1}\cdots F_{g-1}^{p_{g-1}})$ efficiently, we shall use the following formulas:
\begin{lemma}\label{L:calculations}
    Denote by $(n;m_1,\ldots,m_g)$ the characteristic sequence of $Z$ and by $(M_1,\ldots,M_k)$ the multiplicity sequence of $Z$.
    Let $\pi:Y\to Y_0=\fC^2$ be the standard resolution of $Z$ and set 
        \[K_{Y/Y_0}=a^{(1)}E_1+\cdots+a^{(k)}E_k,~\pi^*Z=\tilde{Z}+b^{(1)}E_1+\cdots+b^{(k)}E_k.\]
Then, for any $1\le i\le g$ and $1\le j\le g-1$, we have 
\[a^{(\gamma_i)}+1=\frac{m_i+n}{d_i},~b^{(\gamma_i)}=\frac{M_1^2+\cdots+M_{\gamma_i}^2}{d_i},\] 
and for any set of standard factors $F=\{F_1,\ldots, F_{g-1}\}$ we have
\[\mathrm{ord}(F_j)\cdot X_{\gamma_i}=\begin{cases}
    \frac{b^{(\gamma_i)}}{d_j}=\frac{M_1^2+\cdots+M_{\gamma_i}^2}{d_id_j}&\textrm{when}~i\le j;\\
    \frac{b^{(\gamma_j)}}{d_i}+\frac{m_{j+1}-m_j}{d_i}=\frac{M_1^2+\cdots+M_{\gamma_{j+1}}^2}{d_id_j}&\textrm{when}~i>j.\\
\end{cases}
\]    
\end{lemma}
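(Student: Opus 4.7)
The plan is to prove the three stated formulas in sequence, reducing all of them to inner products with the rows $X_{\gamma_i}$ from Lemma~\ref{L:7} and then combining with telescoping identities coming from the Euclidean chains in Theorem~\ref{T:ECT}. Iterating formula~(\ref{E:compuexcpull}) gives $K_{Y/Y_0}=\sum_{r=1}^{k}(\pi_{k}\circ\cdots\circ\pi_{r+1})^{*}E_{r}$, and reading off the coefficient of $E_{\gamma_i}$ yields $a^{(\gamma_i)}=X_{\gamma_i}\cdot[1,\ldots,1]$. Applying Lemma~\ref{L:pullbackG} to the defining equation of $Z$ (whose strict-transform multiplicities are precisely $M_1,\ldots,M_k$) gives $b^{(\gamma_i)}=[M_1,\ldots,M_k]\cdot X_{\gamma_i}$.

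The key observation driving all subsequent computations is that a block-by-block comparison of Theorem~\ref{T:ECT} with Lemma~\ref{L:7} shows $(X_{\gamma_i})_r=M_r/d_i$ for $1\le r\le\gamma_i$ and $(X_{\gamma_i})_r=0$ for $r>\gamma_i$. Hence $a^{(\gamma_i)}=\frac{1}{d_i}\sum_{r=1}^{\gamma_i}M_r$ and $b^{(\gamma_i)}=\frac{1}{d_i}\sum_{r=1}^{\gamma_i}M_r^2$; the latter is already formula~(2). For~(1), I telescope the first $i$ Euclidean chains in~(\ref{E:3}) to obtain $\sum_{r=1}^{\gamma_i}M_r=n+m_i-d_i$, giving $a^{(\gamma_i)}+1=(n+m_i)/d_i$.

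For the formula on $\mathrm{ord}(F_j)\cdot X_{\gamma_i}$ I must identify $\mathrm{mult}_{q_r}(\tilde{C}_{F_j})$ at every $r$. By Proposition-Definition~\ref{D:standardfactors}, $F_j$ is an irreducible branch with characteristic sequence $(n/d_j;m_1/d_j,\ldots,m_j/d_j)$ and Puiseux series agreeing with $S$ up to degree $(m_j+h_{j+1,0}d_j)/n$. The Corollary following Proposition~\ref{P:3} identifies the first $\gamma_j$ centers of the standard resolution of $F_j$ with $q_1,\ldots,q_{\gamma_j}$, and Theorem~\ref{T:ECT} applied to $F_j$ (same $h$'s, all $r$-values divided by $d_j$) yields $\mathrm{mult}_{q_r}(\tilde{C}_{F_j})=M_r/d_j$ for $r\le\gamma_j$. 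Proposition~\ref{P:3}(a) then forces $F_j$ to also pass through the free centers $q_{\gamma_j+1},\ldots,q_{\tau_j}$ with multiplicity $1$ (since $\tilde{C}_{F_j}$ is already smooth at $q_{\gamma_j}$), while Proposition~\ref{P:3}(c) applied to the first satellite $q_{\tau_j+1}$ imposes a continued-fraction condition on $S_{F_j}$ which its next allowed exponent $(m_j+(h_{j+1,0}+1)d_j)/n$ (forced by polydromy $n/d_j$) fails to satisfy, so $\tilde{C}_{F_j}$ misses $q_{\tau_j+1}$ and every $q_r$ with $r>\tau_j$.

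Substituting these multiplicities into $\mathrm{ord}(F_j)\cdot X_{\gamma_i}=\sum_r\mathrm{mult}_{q_r}(\tilde{C}_{F_j})\cdot(X_{\gamma_i})_r$ handles the two cases. When $i\le j$, only $r\le\gamma_i\le\gamma_j$ contribute, giving $\sum_{r=1}^{\gamma_i}(M_r/d_j)(M_r/d_i)=b^{(\gamma_i)}/d_j=(M_1^2+\cdots+M_{\gamma_i}^2)/(d_id_j)$. When $i>j$, the sum splits as $\frac{1}{d_id_j}\sum_{r=1}^{\gamma_j}M_r^2+\frac{1}{d_i}\sum_{r=\gamma_j+1}^{\tau_j}M_r=\frac{b^{(\gamma_j)}}{d_i}+\frac{h_{j+1,0}d_j+r_{j+1,1}}{d_i}=\frac{b^{(\gamma_j)}}{d_i}+\frac{m_{j+1}-m_j}{d_i}$, using the first equation of the $(j+1)$-st Euclidean chain. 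The equivalent closed form $(M_1^2+\cdots+M_{\gamma_{j+1}}^2)/(d_id_j)$ then follows from a second telescoping $\sum_{r=\gamma_j+1}^{\gamma_{j+1}}M_r^2=(m_{j+1}-m_j)d_j$, obtained by multiplying each equation of the $(j+1)$-st chain by the corresponding $r_{j+1,q}$ and summing. The main technical point I expect will be the rigorous exclusion of $q_{\tau_j+1}$, which requires carefully expanding $(m_j+(h_{j+1,0}+1)d_j)/d_j$ as a continued fraction over $m_j/d_j$ and checking that its first partial quotient $h_{j+1,0}+1$ already disagrees with $h_{j+1,0}$ in Proposition~\ref{P:3}(c).
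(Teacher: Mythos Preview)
Your proof is correct and follows essentially the same route as the paper. Both arguments reduce $a^{(\gamma_i)}$ and $b^{(\gamma_i)}$ to inner products of $X_{\gamma_i}$ with $[1,\ldots,1]$ and $[M_1,\ldots,M_k]$ via~(\ref{E:compuexcpull}) and Lemma~\ref{L:pullbackG}, and then evaluate using Lemma~\ref{L:7} together with the Euclidean chains of Theorem~\ref{T:ECT}. The only stylistic difference is in handling $\mathrm{ord}(F_j)$: the paper observes in one line that Proposition~\ref{P:3} forces $\mathrm{ord}(F_j)=X_{\tau_j}$ (one of the rows already tabulated in Lemma~\ref{L:7}), whereas you rebuild this row entry by entry---multiplicities $M_r/d_j$ for $r\le\gamma_j$, then $1$'s up to $\tau_j$, then $0$'s---and justify the vanishing beyond $\tau_j$ through the continued-fraction criterion of Proposition~\ref{P:3}(c). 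Recognizing $\mathrm{ord}(F_j)=X_{\tau_j}$ at the outset would let you skip that exclusion argument and simply compute $X_{\tau_j}\cdot X_{\gamma_i}$ from the explicit formulas, but the content is identical and your telescoping verifications fill in details the paper leaves to the reader.
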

\begin{proof}
    Since $K_{Y_i}-\pi_i^*K_{Y_{i-1}}=E_i$, by (\ref{E:compuexcpull}), we get \[a^{(\gamma_i)}=[1,\ldots,1]\cdot X_{\gamma_i}.\] 
    By Lemma \ref{L:pullbackG}, we have 
    \[b^{(\gamma_i)}=[M_1,\ldots,M_k]\cdot X_{\gamma_i}.\]
    Since $F_i$ has a $y$-root $S_i$ with $[S_i]_{\le \frac{m_i+h_{i+1,0}d_i}{n}}=[S]_{\le \frac{m_i+h_{i+1,0}d_i}{n}}$ and the polydromy order is $\frac{n}{d_i}$, 
    by Proposition \ref{P:3}, we get $\mathrm{ord}(F_i)=X_{\tau_i}$. Then, using formulas in Lemma \ref{L:7} and (\ref{E:2}), we obtain the desired results.
\end{proof}
\subsection{Standard form of the power series of an irreducible plane curve}
When the initial term is fixed, i.e., for any Puiseux series starting with the term $a_1x^{\frac{m}{n}}$ with the polydromy order equal to $n$,
  the polynomial solved by this Puiseux series is, up to multiplicative constant, of the form
\[(y^{n'}-a_1^{n'}x^{m'})^{d}+\sum_{n\alpha+m\beta>nm}c_{\alpha,\beta} x^\alpha y^{\beta}\]
where $d=\textrm{gcd}(n,m)$ and $n'=\frac{n}{d}$, $m_1'=\frac{m}{d}$ (see for example \cite[Proposition 2.2.5]{Cas00}). The following 
proposition is a generalization of this result, giving a standard form when we fix a partial sum of a Puiseux series of $Z$.
 The idea of the proof is similar. 
    \begin{proposition}\label{P:2}
        Let $S$ be a modified Puiseux series of an irreducible germ $Z$  with the characteristic sequence $(n;m_1,\ldots,m_g)$.
        Let $F=\{F_1,\ldots, F_{g-1}\}$ be a set of standard factors of $S$ and let $F_{i,j}$ be the polynomials as defined in Proposition \ref{D:standardfactors}. Denote by
        \[ d_i=\mathrm{gcd}(m_i,\ldots,m_1,n), ~\forall ~i=1,\ldots,g. \]
        Then, up to a multiplicative unit factor in $\fC\{x,y\}$, for any $1\le i\le g-1$, $0\le j\le h_{i+1,0}$, or $i=g$, $j=0$,
        the power series $f$ of $Z$
        is of the form
        \[F_{i,j}^{d_{i}}+F^{(i,j)}_{ext},\]
        where  the power series
        $F^{(i,j)}_{ext}$ is a linear combination of  $x^{\alpha}y^{\beta}F_{1}^{\delta_1}F_{2}^{\delta_2}\cdots F_{g-1}^{\delta_{g-1} }$ satisfying, for $1\le l\le g$, 
         \[\mathrm{ord}(x^{\alpha}y^{\beta}F_{1}^{\delta_1}F_{2}^{\delta_2}\cdots F_{g-1}^{\delta_{g-1} })\cdot X_{\gamma_l}>\mathrm{ord}( F_{i,j}^{d_{i}})\cdot X_{\gamma_l}.\]
          
\end{proposition}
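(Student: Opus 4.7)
The plan is to mirror the Newton-Puiseux construction of the standard factors $F_{i,j}$ from Proposition-Definition \ref{D:standardfactors}, applying the same iterated coordinate changes to the power series $f$ of $Z$ and comparing leading Newton terms of $f$ with those of $F_{i,j}^{d_i}$.

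I would first handle the terminal case $i=g$, $j=0$. Since $d_g=1$ we have $F_{g,0}^{d_g}=F_{g,0}$, and this polynomial is constructed to admit a $y$-root whose partial sum through $x^{m_g/n}$ agrees with $[S]_{\le m_g/n}$ and whose polydromy order equals $n$. By Lemmas \ref{L:gap} and \ref{L:singleconj} together with the corollary preceding Proposition-Definition \ref{D:standardfactors}, $F_{g,0}$ defines an irreducible germ sharing the standard resolution of $Z$. Thus $f$ and $F_{g,0}$ agree up to a unit of $\fC\{x,y\}$ on their Puiseux terms of order at most $m_g/n$, and the remaining Puiseux terms can be expanded in the set $\{x^{\alpha}y^{\beta}F_1^{\delta_1}\cdots F_{g-1}^{\delta_{g-1}}\}$ by repeating the Newton-Puiseux bootstrap (with Lemma \ref{L:eucalg}) that already produced the auxiliary polynomials $H_{i,j}$.

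For general $(i,j)$ with $i\le g-1$, I would apply to $f$ the iterated coordinate changes used in the proof of Proposition-Definition \ref{D:standardfactors} to build $F_{i,j}$. By that construction, after these substitutions
\[
F_{i,j}=x_{\ast}^{A}\left(\lambda\, y_{\ast}+\sum_{\alpha+\beta>1}c_{\alpha,\beta}\,x_{\ast}^{\alpha}y_{\ast}^{\beta}\right)
\]
for an exponent $A$ and constant $\lambda$ determined by the characteristic data, so $F_{i,j}^{d_i}$ takes the form $x_{\ast}^{d_iA}(\lambda^{d_i}y_{\ast}^{d_i}+\cdots)$. Because these same substitutions exactly track the partial Puiseux sum $[S]_{\le(m_i+jd_i)/n}$ of a $y$-root of $f$, Lemma \ref{L:Puial} forces $f$ itself to assume the form $x_{\ast}^{d_iA}\cdot u\cdot(y_{\ast}^{d_i}+\text{higher order})$ for some unit $u$. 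Multiplying $f$ by an appropriate unit of $\fC\{x,y\}$ to cancel this $u$, the adjusted difference with $F_{i,j}^{d_i}$ becomes supported on Newton terms of strictly larger weight.

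The remaining task is to express this difference as a $\fC$-linear combination of monomials $x^{\alpha}y^{\beta}F_1^{\delta_1}\cdots F_{g-1}^{\delta_{g-1}}$ satisfying the $X_{\gamma_l}$-inequality for every $1\le l\le g$. The main obstacle is precisely this simultaneous control across all $l$: a single Newton term may sit far above the threshold for one $X_{\gamma_l}$ yet only barely above it for another. I would resolve this by reverse induction on $i$ combined with a secondary induction on $h_{i+1,0}-j$, rewriting each offending monomial via the relation $F_{i,j+1}=F_{i,j}+H_{i,j+1}$ and its higher-level analogues, then invoking the bounds $\mathrm{ord}(H_{i,j})\cdot X_{\gamma_l}\ge X_{\gamma_i+j}\cdot X_{\gamma_l}$ established in Proposition-Definition \ref{D:standardfactors} and the explicit intersection formulas of Lemma \ref{L:calculations} to confirm at each step that the required strict inequality is preserved.
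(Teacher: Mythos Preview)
Your plan and the paper's proof share the same backbone: track $f$ through the Newton--Puiseux coordinate changes that were used in Proposition--Definition \ref{D:standardfactors} to build the $F_{i,j}$, and compare leading parts. The difference is organizational: the paper runs a \emph{forward} induction (first $F_{1,0}$, then $F_{1,1}$, \ldots), establishing at each stage that the Newton polygon of the transformed $f$ has a single side (this uses irreducibility of $Z$ via Lemma~\ref{L:singleconj}), reading off $F_{\Gamma}$, and then decomposing the difference; you propose a \emph{reverse} induction starting from $(g,0)$ and descending via the relations $F_{i,j+1}=F_{i,j}+H_{i,j+1}$. Your inductive step is fine in principle---expanding $(F_{i,j}+H_{i,j+1})^{d_i}$ and invoking the bounds on $\mathrm{ord}(H_{i,j+1})\cdot X_{\gamma_l}$ does propagate the inequalities---but this organization concentrates all the real content in the base case $(g,0)$, and your treatment there is the gap.

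Specifically, saying that ``the remaining Puiseux terms can be expanded \ldots\ by repeating the Newton--Puiseux bootstrap (with Lemma \ref{L:eucalg})'' is not a proof. Lemma~\ref{L:eucalg} was used in Proposition--Definition~\ref{D:standardfactors} to \emph{construct} single monomials $x^s y^t$ hitting a prescribed weight; it does not by itself decompose an arbitrary difference $f-F_{g,0}$ into monomials in $x,y,F_1,\ldots,F_{g-1}$ with the required simultaneous inequalities for all $X_{\gamma_l}$. The paper's mechanism for this---which you would need at $(g,0)$ just as much as anywhere else---is concrete: in the transformed variables one takes the Newton polygon of the difference $D$, picks a vertex $x_*^s y_*^t$, observes that under the substitution such a term can only arise from products $x^{\alpha}y^{\beta}F_1^{\delta_1}\cdots$ with $\delta$-exponents forced by the power of $y_*$ and $\alpha,\beta$ forced by the residual $x_*$-weight, checks that these exponents satisfy the $X_{\gamma_l}$-inequalities, subtracts, and iterates (terminating because only finitely many $(\alpha,\beta)$ lie below the threshold). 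You need to supply this argument; without it the base case, and hence the whole reverse induction, does not start. A secondary point: your ``unit $u$'' in the transformed variables does not obviously correspond to a unit of $\fC\{x,y\}$, since the substitution $x=x_*^{n/d_i}$ is not invertible. In the paper the only normalization needed is by a \emph{constant} (fixing the coefficient of $y^n$), after which the single-side Newton polygon argument gives $f_*=(y_*^{d_i/d_{i+1}}-c\,x_*^{\cdots})^{d_{i+1}}+\text{(higher weight)}$ directly, with no further unit to cancel.
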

\begin{proof}
Let $f$ be a power series of $Z$. By Lemma \ref{L:Puial} the $y$-root $S$ is obtained from $f$ 
    by the Newton-Puiseux algorithm.
Since the term with the smallest fractional power is $a_1x^{\frac{m_1}{n}}$, we know from Step 0 of the Newton-Puiseux 
algorithm (see \S \ref{S:NPalgorithm}) that 
    $N(f)$ has a single side $\Gamma_0$ (because $Z$ is irreducible) and the slope of $\Gamma_0$ is $-\frac{n}{m_1}$. 
    By Lemma \ref{L:singleconj}, any root of $F_{\Gamma_0}$ is equal to $\epsilon^{m_1}a_1$ for some $\epsilon$ such that $\epsilon^n=1$, the first coefficient of one of 
    the $y$-root $\sigma_\epsilon(s)$. Since $d_1=\mathrm{gcd}(n,m_1)$, 
    we know all roots of 
    $F_{\Gamma_0}$ are of the form $\epsilon'a_1$ where $\epsilon'^{\frac{n}{d_1}}=1$.
   We know the degree of $F_{\Gamma_1}$ is $h(N(f))=\nu(s)=n$. This 
    implies that the equation of
    $\Gamma_0$ is 
    \[\frac{n}{d_1}\alpha+\frac{m_1}{d_1}\beta=\frac{nm_1}{d_1}\] and 
    \[F_{\Gamma_0}=(Z^{\frac{n}{d_1}}-a_1^{\frac{n}{d_1}})^{d_1}\] up to a multiplicative constant.
     From this equation 
    we know $f$ is of the form
   \begin{align}\label{E:1}
    (y^{\frac{n}{d_1}}-a^{\frac{n}{d_1}}x^{\frac{m_1}{d_1}})^{d_1}+\sum_{n\alpha+m_1\beta>m_1n}c^{(0)}_{\alpha,\beta}x^{\alpha}y^{\beta}.
   \end{align}
   Now we want use the similar argument on the next nonzero term in the $y$-root $S$ to get more details of the standard from of $f$.
Following the algorithm, the first coordinate change in the process of producing the $y$-root $s$ in the algorithm is given as
\begin{align}\label{E:2}
    \begin{cases}
        x=x_1^{\frac{n}{d_1}}\\
        y=x_1^{\frac{m_1}{d_1} }(y_1+a_1).\\
    \end{cases}
\end{align}
We get the form (\ref{E:1}) is equal to 
\[x_1^{\frac{nm_1}{d_1}}\Big{[}\big{(}(y_1+a_1)^{\frac{n}{d_1}}-a^{\frac{n}{d_1}}\big{)}^{d_1}
+\sum_{n\alpha+m_1\beta>m_1n}c_{\alpha,\beta}^{(0)}x_1^{\frac{n\alpha+m_1\beta-nm_1}{d_1}}(y_1+a_1)^{\beta}\Big{]}.\]
Following the Newton-Puiseux algorithm, we set $f_1=x_1^{-\frac{nm_1}{d_1}}f$. Then the degree of $f_1$ is $d_1$ and $h(N(f_1))=d_1$.
Then there are two cases. 

   \noindent\textbf{Case 1}: The next nonzero fractional power term is $b_{m_1+rd_1}x^{\frac{m_1+cd_1}{n}}$ 
    with the integer $1\le r\le h_{2,0}$. Then from Step 1 of the algorithm we know that $N(f_1)$ has a single side $\Gamma_1$ 
    and the slope of $\Gamma_1$ is $-\frac{d_1}{rd_1}=-\frac{1}{r}$. By Lemma \ref{L:Puial}, any root of $F_{\Gamma_1}$ is equal to 
    $\epsilon^{m_1+rd_1} b_{m_1+rd_1}$ where $\epsilon^n=1$ and $\epsilon^{m_1}=1$ (since the first coefficient is a fixed number $a_1$).
     Hence we get $b_{m_1+rd_1}$ is the only root of $F_{\Gamma_1}$ and the degree of $F_{\Gamma_1}$ is $d_1=h(N(f_1))$. Therefore 
    the equation of $\Gamma_1$ is 
    \[\alpha_1+r\beta_1=r,\]
    and 
     \[F_{\Gamma_1}=(Z-b_{m_1+rd_1})^{d_1}.\]
      Therefore the series $f_1$ of variables $x_1,y_1$, is of the form 
    \[ (y_1-b_{m_1+rd_1}x_1^r)^{d_1}+\sum_{\alpha_1+r\beta_1>rd_1}c_{\alpha_1,\beta_1}^{(1)}x_1^{\alpha_1}y_1^{\beta_1}.\]
We claim that $f$ is of the form
    \[(y^{\frac{n}{d_1}}-a^{\frac{n}{d_1}}x^{\frac{m_1}{d_1}}+H_{1,r})^{d_1}+
    \sum_{n\alpha+m_1\beta+\frac{m_1n+rd_1^2}{d_1}\delta_1>m_1n+rd_1^2}c^{(1)}_{\alpha,\beta,\delta_1}x^{\alpha}y^{\beta}(y^{\frac{n}{d_1}}-a^{\frac{n}{d_1}}x^{\frac{m_1}{d_1}})^{\delta_1}.\]
    where $y^{\frac{n}{d_1}}-a^{\frac{n}{d_1}}x^{\frac{m_1}{d_1}}+H_{1,r}$ has a $y$-root with initial terms $a_1x^{\frac{m_1}{n}}+b_{m_1+rd_1}x^{\frac{m_1+rd_1}{n}}$ and 
    with polydromy order $\frac{n}{d_1}$. Indeed, up to a multiplicative constant,
    \[D:=f-(y^{\frac{n}{d_1}}-a^{\frac{n}{d_1}}x^{\frac{m_1}{d_1}}+H_{1,r})^{d_1}=x_1^{\frac{m_1n}{d_1}}\cdot
    \sum_{\alpha_1+r\beta_1>rd_1}c_{\alpha_1,\beta_1}'x_1^{\alpha_1}y_1^{\beta_1}.\]
    Denote by $\tilde{N}$ the Newton polygon of the difference $D$, and pick a vertex $x_1^sy_1^t$ of $\tilde{N}$. 
By (\ref{E:2}) we observe that, the term $c'_{s,t}x_1^sy_1^t$ must be provided by polynomials, up to multiplicative constants, of the form
$x^\alpha y^\beta (y^{\frac{n}{d_1}}-a_1^{\frac{n}{d_1}}x^{\frac{m_1}{d_1}})^t$ 
such that \[\frac{n}{d_1}\alpha+\frac{m_1}{d_1}\beta+\frac{m_1n}{d_1^2}t=s.\]
 Combining with the condition $s-\frac{m_1n}{d_1}+rt>rd_1$, we obtain 
\[n\alpha+m_1\beta+\frac{m_1n+rd_1^2}{d_1}t>m_1n+rd_1^2.\]
By substracting a suitable linear combination of polynomials of such form from the difference $D$, all terms supported on $\tilde{N}$ 
are eliminated and we can do the similar argument for the new difference. This process will terminate in finite steps since there are finitely many $(\alpha,\beta)$ such that $n\alpha+m_1\beta\le m_1n+rd_1^2$. Thus the difference $D$ is of the form 
\[ \sum_{n\alpha+m_1\beta+\frac{m_1n+rd_1^2}{d_1}\delta_1>m_1n+rd_1^2}c^{(1)}_{\alpha,\beta,\delta_1}x^{\alpha}y^{\beta}(y^{\frac{n}{d_1}}-a^{\frac{n}{d_1}}x^{\frac{m_1}{d_1}})^{\delta_1}.\]

    \noindent\textbf{Case 2}: Suppose that the next nonzero term of $S$ is $a_2x^{\frac{m_2}{n}}$. Then $N(f_1)$ has a single side $\Gamma_1$ with the slope $-\frac{d_1}{m_2-m_1}$, and 
    $(d_1,m_2-m_1)=d_2$. This implies that 
    \[F_{\Gamma_1}=(Z^{\frac{d_1}{d_2}}-a_2^{\frac{d_1}{d_2}})^{d_2}\]
   We get the series $f_1$ is of the form 
    \[(y_1^{\frac{d_1}{d_2}}-a_2^{\frac{d_1}{d_2}}x_1^{\frac{m_2-m_1}{d_2}})^{d_2}+\sum_{d_1\alpha_1+(m_2-m_1)\beta_1}c_{\alpha_1,\beta_1}^{(1)}x^{\alpha_1}y^{\beta_1}.\]

    By a very similar argument as in Case 1, the series $f$ is of the form
    \begin{align*}
       \Big{(} (y^{\frac{n}{d_1}}-a^{\frac{n}{d_1}}x^{\frac{m_1}{d_1}})^{\frac{d_1}{d_2}}+H_{2,0} \Big{)}^{d_2}+
       \sum_{n\alpha+m_1\beta+F_1^{(\gamma_2)}\delta_1>d_1F_1^{(\gamma_2)}}c^{(1)}_{\alpha,\beta,\delta_1}x^{\alpha}y^{\beta}(y^{\frac{n}{d_1}}-a^{\frac{n}{d_1}}x^{\frac{m_1}{d_1}})^{\delta_1}
       \end{align*}    
where $(y^{\frac{n}{d_1}}-a^{\frac{n}{d_1}}x^{\frac{m_1}{d_1}})^{\frac{d_1}{d_2}}+H_{2,0}$ has a $y$-root with initial terms $a_1x^{\frac{m_1}{n}}+a_{2}x^{\frac{m_2}{n}}$ and 
with polydromy order $\frac{n}{d_2}$, and $H_{2,0}$ is a linear combination of $x^sy^t$ satisfying $ns+m_1t=\frac{m_1n+d_1(m_2-m_1)}{d_2}=\frac{d_1}{d_2}F_1^{(\gamma_2)}$.
 
By repeating arguments very similar to the ones in Case 1 and Case 2, we conclude that $f$ is of the form 
\[F_1^{d_1}+F^{(1)}_{ext}\]
where $F_{ext}^{(1)}$ is a linear combination of $x^\alpha y^\beta F_1^{\delta_1}$ satisfying 
\[n\alpha+m_1\beta+\big{(}\mathrm{ord}(F_1)\cdot X_{\gamma_2}\big{)}\delta_1> d_1\big{(}\mathrm{ord}(F_1)\cdot X_{\gamma_2}\big{)}.\]
Inductively, by very similar arguments, for any $1\le i\le g-1$, we obtain that $f$ is of the form
\[F_{i,j}^{d_i}+F^{(i,j)}_{ext},\]
where $F^{(i,j)}_{ext}$ is a linear combination of $x^\alpha y^\beta F_1^{\delta_1}\cdots F_{g-1}^{\delta_{g-1}}$ satisfying, 
for $1\le l\le g$, 
\[\mathrm{ord}(x^{\alpha}y^{\beta}F_{1}^{\delta_1}F_{2}^{\delta_2}\cdots F_{g-1}^{\delta_{g-1} })\cdot X_{\gamma_l}>\mathrm{ord}( F_{i,j}^{d_{i}})\cdot X_{\gamma_l}.\]
          
\end{proof}
\begin{definition}\label{D:Fideal}
    Let $S$ be a modified Puiseux series of an irreducible germ $Z$.
    Fixing a set of standard factors $F=\{F_1,\ldots, F_{g-1}\}$ of $S$ (see Definition \ref{D:standardfactors}), for any $c\in \fQ$, 
    we define an ideal $\so_S^{\ge c}$ generated by all terms $x^{p_x}y^{p_0}F_1^{p_1}\cdots F_{g-1}^{p_{g-1}}$\footnote{For the convenience of calculations in 
    Remark \ref{Re:recover}, we denote by $p_0$ the power of $y$.} satisfying the 
    condition 
    \[\rho(x^{p_x}y^{p_0}F_1^{p_1}\cdots F_{g-1}^{p_{g-1}})\ge c,\]
    and an ideal $\so_S^{>c}$ generated by all terms $x^{p_x}y^{p_0}F_1^{p_1}\cdots F_{g-1}^{p_{g-1}}$ satisfying the 
    condition 
    \[\rho(x^{p_x}y^{p_0}F_1^{p_1}\cdots F_{g-1}^{p_{g-1}})> c.\]
    \end{definition}
    \begin{remark}
        For any $c\in\fQ$, the ideal $\so_S^{\ge c}$ (or $\so_S^{> c}$) is independent of the choice of the set of standard factors of $S$. Indeed, let  $F=\{F_1,\ldots, F_{g-1}\}$ and $F'=\{F_1',\ldots, F_{g-1}'\}$
        be two sets of 
        standard factors of $S$. We have
        \[\rho(x^{p_x}y^{p_0}F_1^{p_1}\cdots F_{g-1}^{p_{g-1}})=\rho(x^{p_x}y^{p_0}F_1'^{p_1}\cdots F_{g-1}'^{p_{g-1}})\]
        for any powers $p_x,p_0,p_1,\ldots,p_{g-1}$. Since $F_i$ has a $y$-root $S_i$ with $[S_i]_{<\frac{m_{i+1}}{n}}=[S]_{<\frac{m_{i+1}}{n}}$ and with the characteristic 
        sequence $(\frac{n}{d_i};\frac{m_1}{d_i},\ldots,\frac{m_i}{d_i})$, by Proposition \ref{D:standardfactors}, we can choose $\{F_1,\ldots, F_{i-1}\}$ to be a set of standard 
        factors of $S_i$.
        By Propostion \ref{P:2}, the difference $F_i-F_{i}'$ can be written as a linear combination of $x^\alpha y^{\beta}F_1^{\delta_1}\cdots F_{i-1}^{\delta_{i-1}}$ satisfying, 
        for any $1\le l\le g$, 
        \[\mathrm{ord}(x^\alpha y^{\beta}F_1^{\delta_1}\cdots F_{i-1}^{\delta_{i-1}})\cdot X_{\gamma_l}> \mathrm{ord}(F_{i,0})\cdot X_{\gamma_l}=\mathrm{ord}(F_i)\cdot X_{\gamma_l}\]
        and hence $\rho(F_i-F_i')> \rho(F_i)$. Therefore the ideal $\so_S^{\ge c}$ is well defined for any $c\in\fQ$.
    \end{remark}

\section{The main theorem}
\subsection{Main result}
We can now state the main result of the paper.
Recall that the standard factors $F_1,\ldots, F_{g-1}$ are defined in Definition \ref{D:standardfactors} and, for any $c\in\fQ$, the ideal 
$\so_S^{>c}$ is defined in Definition \ref{D:Fideal}. 

\begin{theorem}\label{T:main2}
    Let $S$ be a modified Puiseux series of an irreducible germ $Z$ which is not tangent to the $y$-axis.
    Let $F=\{F_1,\ldots,F_{g-1}\}$ be a set of standard factors of $S$. Then for $0<\alpha<1$,
   we have $\mathfrak{I}(\alpha Z)=\so_S^{>\alpha}$.
\end{theorem}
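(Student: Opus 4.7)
The plan is to prove the two inclusions separately. The easy direction $\so_S^{>\alpha}\subseteq \mathfrak{I}(\alpha Z)$ is essentially immediate: by the simplified formula (\ref{E:8}) derived from \cite[Proposition 7.14]{Jar06}, a power series $G$ lies in $\mathfrak{I}(\alpha Z)$ if and only if $\rho(G)>\alpha$; since each generator of $\so_S^{>\alpha}$ satisfies this by definition, and $\mathfrak{I}(\alpha Z)$ is an ideal, the inclusion follows.

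For the reverse inclusion I take $G\in\mathfrak{I}(\alpha Z)$ and aim to realize it as a $\fC$-linear combination of standard monomials $M_{p_x,\bar q} = x^{p_x}y^{q_0}F_1^{q_1}\cdots F_{g-1}^{q_{g-1}}$ with bounded indices $0\le q_0<n/d_1$ and $0\le q_i<d_{i-1}/d_i$ for $1\le i\le g-1$. To set this up, I first verify, using the construction of Proposition-Definition \ref{D:standardfactors}, that every $H_{i,j}$ is divisible by $x$; this forces $F_i(0,y)=y^{n/d_i}$ and hence makes each $F_i$ (up to a unit) a Weierstrass polynomial in $y$ of degree $n/d_i$. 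Then I perform iterated Weierstrass division by $F_{g-1}$, then $F_{g-2}$, \ldots, then $F_1$, and finally expand in powers of $y$ and $x$; this yields the expansion
\[G \;=\; \sum_{p_x \ge 0,\ \bar q} c_{p_x,\bar q}\, x^{p_x} y^{q_0} F_1^{q_1}\cdots F_{g-1}^{q_{g-1}}, \qquad c_{p_x,\bar q}\in\fC,\]
with the index bounds making the expansion unique.

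The crux of the proof is the following non-cancellation lemma: the standard monomials $M_{p_x,\bar q}$ with the above bounds are distinguished by their $g$-tuples $\bigl(\mathrm{ord}(M)\cdot X_{\gamma_1},\ldots,\mathrm{ord}(M)\cdot X_{\gamma_g}\bigr)$, and this distinguishing is strong enough to guarantee
\[\rho(G) \;=\; \min_{(p_x,\bar q)\,:\,c_{p_x,\bar q}\ne 0} \rho(M_{p_x,\bar q}).\]
Granted this, the hypothesis $\rho(G)>\alpha$ forces $\rho(M_{p_x,\bar q})>\alpha$ for every term that actually appears in the expansion, so $G\in\so_S^{>\alpha}$. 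I would extract the distinctness of the $g$-tuples by writing each pairing $\mathrm{ord}(M)\cdot X_{\gamma_l}$ explicitly via Lemmas \ref{L:7} and \ref{L:calculations} in terms of $(n;m_1,\ldots,m_g)$; the bounds $q_i<d_{i-1}/d_i$ are precisely the uniqueness-of-representation bounds in the semigroup of values of $Z$ generated by $n, m_1, \bar m_2, \ldots, \bar m_g$, where $\bar m_{j+1}$ arises from $\mathrm{ord}(F_j)\cdot\mathrm{ord}(f)$.

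The main obstacle is the non-cancellation half of this lemma. Distinctness of the $g$-tuples as elements of $\fQ^g$ is not by itself enough: at the index $\gamma_l$ realizing the minimum of $\rho$, several standard monomials may share the same value of $\mathrm{ord}(M)\cdot X_{\gamma_l}$, and one must check that the weighted combination of their leading forms along $E_{\gamma_l}$, with coefficients $c_{p_x,\bar q}\in\fC$, cannot vanish. I expect to handle this by exploiting the inductive standard-form structure of Proposition \ref{P:2}, which displays an explicit leading factor at each $E_{\gamma_l}$, together with the irreducibility of each $F_i$ provided by Lemmas \ref{L:singleconj} and \ref{L:gap}; this is where the bulk of the technical work lies.
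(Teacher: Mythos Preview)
Your approach is viable but takes a genuinely different route from the paper's. The paper does not use a canonical Weierstrass expansion or argue via non-cancellation. Instead it reduces to irreducible $G$ (handling products by multiplicativity of $\mathrm{ord}(\cdot)\cdot X_{\gamma_l}$) and, for such $G$, compares the infinitely near points of $C_G$ with those of $Z$ via Proposition~\ref{P:3}: locating the first index where the two sequences of blow-up centres diverge, it applies the standard-form Proposition~\ref{P:2} to $G$ itself (with $\{F_1,\ldots,F_{i-1}\}$ serving as standard factors for the relevant truncation) and reads off an expression for $G$ as a combination of monomials $x^\alpha y^\beta F_1^{\delta_1}\cdots F_{g-1}^{\delta_{g-1}}$ each satisfying $\mathrm{ord}(\cdot)\cdot X_{\gamma_l}\ge \mathrm{ord}(G)\cdot X_{\gamma_l}$ for \emph{every} $l$. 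This componentwise inequality gives $\rho(\text{term})\ge\rho(G)$ term by term, so no cancellation analysis is ever needed; the price is a geometric case split on whether the divergence occurs at a free or a satellite point and on tangency to the $y$-axis. Your route trades that case analysis for the non-cancellation lemma, which amounts to the linear independence of initial forms of bounded-index monomials in the graded ring of each divisorial valuation $v_{\gamma_l}$---a different but comparably technical computation, and one you have correctly identified as the crux.

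One small correction to your setup: iterated division by $F_{g-1},\ldots,F_1,y$ yields $q_i<d_i/d_{i+1}$ for $0\le i\le g-2$ (so $q_0<n/d_1$, $q_1<d_1/d_2$, etc.), while the exponent $q_{g-1}$ of $F_{g-1}$ remains \emph{unbounded}; your stated bounds $q_i<d_{i-1}/d_i$ are shifted by one. With the corrected bounds the uniqueness you invoke is exactly the classical unique-representation property for the minimal generators $n,m_1,\bar m_2,\ldots,\bar m_g$ of the value semigroup of $Z$, and your plan goes through.
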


\begin{proof}
We start by proving that 
\[G\in \so_S^{\ge \rho(G)},\forall~G\in\fC\{x,y\}.\]
     First, let $G\in\fC\{x,y\}$ be an irreducible series. By Lemma \ref{L:singleconj} and Lemma \ref{L:gap}, it defines a germ of an irreducible plane curve $C_G$. 
     Let 
    \[\pi:Y=Y_k\stackrel{\pi_k}{\xrightarrow{\hspace*{0.7cm}}}
    Y_{k-1}\stackrel{\pi_{k-1}}{\xrightarrow{\hspace*{0.7cm}}}\cdots \xrightarrow{\hspace*{0.7cm}} 
    Y_1\stackrel{\pi_1}{\xrightarrow{\hspace*{0.7cm}}} Y_0=\fC^2\] 
    be the standard resolution of $Z$ and $q_1,\ldots,q_k$ be the centers of the blow-ups $\pi_1,\ldots,\pi_k$, and similarly, let
    \[\pi':Y'=Y'_{k'}\stackrel{\pi'_{k'}}{\xrightarrow{\hspace*{0.7cm}}}
    Y'_{k'-1}\stackrel{\pi_{k'-1}}{\xrightarrow{\hspace*{0.7cm}}}\cdots \xrightarrow{\hspace*{0.7cm}} 
    Y'_1\stackrel{\pi'_1}{\xrightarrow{\hspace*{0.7cm}}} Y_0=\fC^2\] 
    be the standard resolution of $C_G$ and $q'_1,\ldots,q'_{k'}$ be the centers of the blow-ups $\pi'_1,\ldots,\pi'_{k'}$.
      If $C_G$ is tangent 
      to the $y$-axis, then we reverse the order of coordinates and find a $x$-root $S'$ of $G$, which is a fractional power series of $y$ with polydromy order $m'$ and of the form
      \[c_{m'}y+\cdots+c_{hm'}y^h+a'y^{\frac{n'}{m'}}+\cdots\]
      where $m'<n'$ and $h=[\frac{n'}{m'}]$. Applying Proposition \ref{P:2} to $S'$ with coordinates $y,x$, and set $\textrm{gcd}(n',m')=d'$,
      we obtain that $G$ is of the form
      \[\Big{(}(x-c_{m'}y+\cdots+c_{hm'}y^h)^{\frac{m'}{d'}}-(a')^{\frac{m'}{d'}}y^{\frac{n'}{d'}}\Big{)}^{d'}
      +\sum_{n'\alpha+m'\beta>n'm'}c_{\alpha,\beta}y^\alpha (x-c_{m'}y+\cdots+c_{hm'}y^h)^\beta.\]   
      So $G\in (x,y)^{m'}$.  We know for any $x^sy^t\in (x,y)^{m'}$, 
      $\mathrm{ord}(x^sy^t)\cdot X_{\gamma_i}=\frac{sn+m_1t}{d_i}\ge\frac{m'n}{d_i}=\mathrm{ord}(x^{m'})\cdot X_{\gamma_i}$.
       Hence $G\in \so_F^{\ge\rho(x^{m'})}$.
      On the other hand, we see that $\textrm{ord}(G)$ is of the form $(m',0,\ldots,0)$ (since after the first blow-up the strict transform of $G$ will never pass the 
      center $q_2$). 
     By Lemma \ref{L:7}, we get $\textrm{ord}(G)\cdot X_{\gamma_i}=\frac{m'n}{d_i}$. Then 
      by Lemma \ref{L:7} and  (\ref{E:8}), we have $\rho(G)=\rho(x^{m'})$.  Hence we obtain $G\in \so_S^{\ge \rho(G)}$.
Now we suppose that $G$ is not tangent to $y$-axis. Denote by $(n';m_1',\ldots,m_{g'}')$ the characteristic sequence of $S'$ 
and denote by
\begin{align}\label{E:Zprimemulti}
    (M_1',\ldots,M_{k'}')=(n',\ldots,n',r_{1,1}',\ldots,r'_{g',k_{g'}})
\end{align}
the multiplicity sequence of $S'$,
where $n'$ appears $h_{1,0}'$ times, $r_{1,1}'$ appears $h_{1,1}'$ times, and so on, where $h'_{\bullet,\bullet}$ and $r'_{\bullet,\bullet}$ are invariants provided by the chain of 
$g'$ Euclidean algorithm similar to (\ref{E:3}).
Similar to Notation \ref{N:gamtau}, let $\gamma_0'=0$ and for $1\le i\le g'$, let
    \[\gamma_i'=\gamma_{i-1}'+\sum_{j=0}^{k_i}h'_{i,j},\hspace{10pt}\textrm{and}\hspace{10pt}\tau'_{i-1}=\gamma'_{i-1}+h'_{i,0}+1.\]
       We claim that, $G$ can be written as a linear combination of $x^\alpha y^\beta F_1^{\delta_1}\cdots F_{g-1}^{\delta_{g-1}}$ 
    satisfying 
    \[ \mathrm{ord}(x^\alpha y^\beta F_1^{\delta_1}\cdots F_{g-1}^{\delta_{g-1}})\cdot X_{\gamma_i}\ge \mathrm{ord}(G)\cdot X_{\gamma_i}.\]

    Suppose first that $q_1=q_1',\ldots,q_j=q_j'$ and $q_{j+1}\neq q_{j+1}'$ with $j<\min\{k,k'\}$.
 Suppose further that $q_j=q_j'$ is a free point of $Z$ and $Z'$, and suppose that $q_{j+1}\neq q_{j+1}'$ are free points of $Z$ and $Z'$, respectively.
Then there exists $0\le i\le g-1$ such that $\gamma_i=\gamma_i'<j< \min\{\tau_i,\tau_i'\}$. Let $r=j-\gamma_i$. By Proposition \ref{P:3}, we know that $G$ has 
a $y$-root $S'$ which is of the form 
\[S'=[S]_{\le\frac{m_i+rd_i}{n}}+bx^{\frac{m_i'+rd_i'}{n'}}+\cdots\]
where $b\neq 0$. By Proposition \ref{P:2}, we find $G$ is of the form
\[(F_{i-1}^{\frac{d_{i-1}'}{d_i'}}+H_{i,0}+\cdots+H_{i,r}+cH_{i,r})^{d_i'}+F_{ext}^{(i)}\]
where $F_{ext}^{(i)}$ is a linear combination of $x^\alpha y^\beta F_1^{\delta_1}\cdots F_{i-1}^{\delta_{i-1}}$ satisfying 
\[\mathrm{ord}(x^\alpha y^\beta F_1^{\delta_1}\cdots F_{i-1}^{\delta_{i-1}})\cdot X_{\gamma_i}> \mathrm{ord}(G)\cdot X_{\gamma_i}.\]
On the other hand, we have 
\[\mathrm{ord}(G)=[\overbrace{n',\ldots,n',\ldots,d_i',\ldots,d_i'}^{\gamma_i'},\overbrace{d_i',\ldots,d_i'}^r,0,\ldots,0].\]
So by Lemma \ref{L:7}, we get 
\[\textrm{ord}(G)\cdot X_{\gamma_l}=\begin{cases}
    d_i'\cdot \frac{M_1^2+\cdots+M_{\gamma_l}^2}{d_i d_l}&~l\le i\\
    d_i'\cdot \frac{M_1^2+\cdots+M_j^2}{d_i d_l}&~l>i.\\
\end{cases}
\]
By Proposition {D:standardfactors}, we know $F_{i,r}$ has a $y$-root has a partial sum equal to $[S]_{\le \frac{m_i+rd_i}{n}}$ and with the polydromy order $\frac{n}{d_i}$. By Proposition 2.20, we then know $F_{i,r}$ goes through at least $q_1,\ldots,q_j$. So we get $\mathrm{ord}(F_{i,r})\cdot X_{\gamma_l}\ge\frac{1}{d_i'}\mathrm{ord}(G)\cdot X_{\gamma_l}$. Therefore the claim is true in this case. 
For the other cases, following a very similar argument, we proved the claim is true. This implies that $G\in\so_S^{\ge \rho(G)}$. 

When $G$ is reducible, we can write $G=G_1^{r_1}\cdots G_m^{r_m}$ as a decomposition of irreducible factors.
Without loss of generality, assume $m=2$. By the argument above we have two inequalities:
for $1\le l\le g$, and $i=1,2$, $G_i$ is a linear combination of $x^{\alpha_i}y^{\beta_i}F_1^{\delta_{i,1}}\cdots F_{g-1}^{\delta_{i,g-1}}$ satisfying 
\[\alpha_i\frac{n}{d_l}+\beta_i\frac{m_1}{d_l}+
 \sum_{w=1}^{g-1}\big{(}\mathrm{ord}(F_w)\cdot X_{\gamma_l}\big{)}\delta_{i,w}\ge \textrm{ord}(G_i)\cdot X_{\gamma_l}.\]
So $G_1\cdot G_2\in \so_S^{\ge\rho(G_1G_2)}$ and hence $G\in \so_S^{\ge \rho(G)}$.

   Following Definition \ref{D:8} and by (\ref{E:multipl}), we know for $0\le \alpha<1$,
   \[G\in \mathfrak{I}(\alpha Z)\Longleftrightarrow \rho(G)> \alpha.\]
 Therefore $\mathfrak{I}(\alpha Z)\subset \so_S^{>\alpha}$. Conversely, by Definition \ref{D:8} and (\ref{E:multipl}), we know that generators of 
   $\so_S^{>\alpha}$ have $\rho$-value greater than $\alpha$ and hence are in the ideal $\mathfrak{I}(\alpha Z)$. So $\so_S^{>\alpha}\subset \mathfrak{I}(\alpha Z)$.
   Therefore we obtain $\mathfrak{I}(\alpha Z)=\so_S^{>\alpha}$
 \end{proof}
 \begin{corollary}\label{C:jumpingnumber}
    Let $S$ be a modified Puiseux series of an irreducible germ $Z$ which is not tangent to the $y$-axis.
    Let $F=\{F_1,\ldots,F_{g-1}\}$ be a set of standard factors of $S$. 
     The set of jumping numbers of $Z$ between 0 and 1 is
     \[\{\rho(x^{p_x}y^{p_0}F_1^{p_1}\cdots F_{g-1}^{p_{g-1}})~|~\forall~p_x,p_0,p_1,\ldots,p_{g-1}\in\rN\}\cap (0,1).\]
 \end{corollary}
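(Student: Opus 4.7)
The plan is to derive the corollary from Theorem~\ref{T:main2} together with the definition of a jumping number and a short discreteness argument. Recall that $\alpha \in \fQ_+$ is a jumping number of $Z$ precisely when $\mathfrak{I}(\alpha Z) \subsetneq \mathfrak{I}((\alpha - \epsilon) Z)$ for every sufficiently small $\epsilon > 0$. For $0 < \alpha < 1$, Theorem~\ref{T:main2} gives $\mathfrak{I}(\alpha Z) = \so_S^{>\alpha}$, and its proof already records the cleaner pointwise equivalence
\[ G \in \mathfrak{I}(\alpha Z) \Longleftrightarrow \rho(G) > \alpha \qquad \text{for every } G \in \fC\{x,y\}. \]
In particular, $\alpha \in (0,1)$ is a jumping number if and only if there exists some $G \in \fC\{x,y\}$ with $\rho(G) = \alpha$.

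I would then promote this to the monomial statement by bouncing through the ideal $\so_S^{\ge \alpha}$. If $\rho(G) = \alpha$, then by Theorem~\ref{T:main2} we have $G \in \so_S^{\ge \alpha}$, so $G$ is an $\so_{\fC^2,O}$-linear combination of monomials $M = x^{p_x}y^{p_0}F_1^{p_1}\cdots F_{g-1}^{p_{g-1}}$ with $\rho(M) \ge \alpha$. Not all such $M$ can satisfy $\rho(M) > \alpha$, for otherwise $G \in \so_S^{>\alpha} = \mathfrak{I}(\alpha Z)$ would force $\rho(G) > \alpha$ by the pointwise equivalence above; hence some generator appearing in the sum satisfies $\rho(M) = \alpha$. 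Conversely, any such monomial $M$ with $\rho(M) = \alpha$ belongs to $\mathfrak{I}((\alpha - \epsilon)Z) \setminus \mathfrak{I}(\alpha Z)$ for every $\epsilon > 0$, witnessing that $\alpha$ is a jumping number.

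It remains to verify that the set
\[\{\rho(x^{p_x}y^{p_0}F_1^{p_1}\cdots F_{g-1}^{p_{g-1}}) : p_x, p_0, p_1, \ldots, p_{g-1} \in \rN\}\]
has no accumulation points in $(0,1)$, so the above equivalences assemble into the claimed equality. Using formula~(\ref{E:8}) and the explicit evaluations supplied by Lemma~\ref{L:calculations}, $\rho(M)$ is the minimum of $g$ affine forms in the nonnegative integer variables $(p_x, p_0, p_1, \ldots, p_{g-1})$, each with a positive rational denominator $b^{(\gamma_i)}$ and strictly positive coefficients of every variable. A bound $\rho(M) \le 1$ therefore forces each $p_j$ into a finite range, giving finiteness of the value set in any compact subinterval of $(0,1)$.

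The main obstacle I anticipate is precisely this positivity check for the coefficients of the variables $p_x, p_0, p_1, \ldots, p_{g-1}$ in the expression for $\rho(M)$; once one confirms via Lemma~\ref{L:calculations} that $\frac{n}{d_i}, \frac{m_1}{d_i}$, and each $\mathrm{ord}(F_j) \cdot X_{\gamma_i}$ are strictly positive, the discreteness — and hence the corollary — follows immediately.
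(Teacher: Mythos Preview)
Your argument is correct and is the natural way to derive the corollary from Theorem~\ref{T:main2}; the paper itself states the corollary without proof, so there is nothing to compare against beyond confirming that your derivation is the intended one.

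Two small points of presentation. First, when you write ``by Theorem~\ref{T:main2} we have $G\in\so_S^{\ge\alpha}$,'' you are really invoking the first claim established \emph{inside} the proof of Theorem~\ref{T:main2}, namely $G\in\so_S^{\ge\rho(G)}$ for every $G$; the theorem's statement concerns only $\so_S^{>\alpha}$. Second, the discreteness you verify at the end is not an afterthought needed to ``assemble'' already-proved equivalences---it is what makes your very first equivalence (``$\alpha$ is a jumping number iff some $G$ has $\rho(G)=\alpha$'') rigorous. Without it, the jumping-number condition only produces, for each $\epsilon>0$, some $G_\epsilon$ with $\alpha-\epsilon<\rho(G_\epsilon)\le\alpha$; you need discreteness of the value set of $\rho$ (equivalently, of the monomial $\rho$-values via your step~5) to force $\rho(G_\epsilon)=\alpha$ for small $\epsilon$. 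Reordering so that the finiteness/positivity check comes before, or is cited in, that first equivalence would make the logic airtight. The positivity itself is immediate from Lemma~\ref{L:calculations}: each $\mathrm{ord}(F_j)\cdot X_{\gamma_i}$ is a positive sum of squares of multiplicities divided by a positive integer.
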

 \begin{remark}\label{Re:recover}
     Let $(M_1,\ldots,M_k)$ be the multiplicity sequence of $Z$. Set $d_0=n$ and
     \[B_\nu=\frac{M_1^2+\cdots+M_{\gamma_v}^2}{d_{\nu-1}}\]
    so that $B_\nu$ is the same as the notation $b_\nu$ in \cite{Jar06}. By Lemma \ref{L:calculations}, for any $p_x,p_0,p_1,\ldots,p_{g-1}\in\rN$, we denote by 
     \[\rho(x^{p_x}y^{p_0}F_1^{p_1}\cdots F_{g-1}^{p_{g-1}})=\min_{1\le i\le g}\Omega_i\]
     where, for $1\le l\le g$, 
     \[\Omega_l=\frac{m_l+n+np_x+\sum_{j=0}^{l-1}B_jp_j}{d_{l-1}B_l}+\sum_{j=l}^{g-1}\frac{p_j}{d_j}.\]
     Notice that \[M_1^2+\cdots+M_{\gamma_l}^2=m_1n+d_1(m_2-m_1)+d_2(m_3-m_2)+\cdots +d_{l-1}(m_l-m_{l-1}).\]
     By elementary calculations, we can write 
     \[\Omega_l=\frac{p_{l-1}+1}{d_{l-1}}+\frac{t_l+1}{B_l}+\sum_{j=l}^{g-1}\frac{p_j}{d_j} \]
     with $t_l\in \rN$, and we obtain
     \[\Omega_{l}\le\Omega_{l+1}\Longleftrightarrow \frac{p_{l-1}+1}{d_{l-1}}+\frac{t_l+1}{B_l}\le \frac{1}{d_l}. \]
     So the above corollary recovers formulas given in 
      \cite[Theorem 9.4]{Jar06}. 
 \end{remark}
 \begin{example}\label{Ex:1}
    Consider the germ 
    \[Z=(y^4-4x^2y^3+4x^4y^2-2x^3y^2+4x^5y-4x^6y+x^6=0).\] 
    After running the Newton-Puiseux Algorithm for $Z$, we know it is irreducible since it has a single conjugacy class of Puiseux series. The characteristic sequence of $Z$ is (4;6,9).
  Observe that $Z$ has a Puiseux series $S$ with a partial sum 
\[x^\frac{3}{2}+x^2+x^{\frac{9}{4}}\]
 and choose a set of standard factors of $S$ 
    \[F=\{F_1=y^2-x^3-2x^2y\}.\]
 By Theorem \ref{T:main2}, a set of  generators of 
 $\mathfrak{I}(\alpha Z)$ are all the polynomials of the form $x^{p_x}y^{p_0}F_1^{p_1}$ satisfying
  \[\min\Big{\{}\frac{5+2p_x+3p_0+6p_1}{12},\frac{13+4p_x+6p_0+15p_1}{30}\Big{\}}>\alpha.\]
Then we can describe the multiplier ideals $\mathfrak{I}(\alpha Z)$ with $0<\alpha<1$ explicitly as in Table \ref{tab:table2} below.
\end{example}

\subsection{A question}

 It is well known that two irreducible plane curves are topologically equivalent if and only if they have the same characteristic sequence (see for example \cite[Theorem 21]{BK86}). 
J\"{a}rvilehto \cite{Jar06} proved that the data of jumping numbers of multiplier ideals between 0 and 1 of the irreducible plane curve 
is the same as the data of the characteristic sequence, and hence determines the topological equivalence class.

We notice that $I_0(\alpha Z)= I_0(\alpha Z')$ for all $0<\alpha\le 1$ after a possible holomorphic change of coordinates 
    is not a sufficient condition for the analytic equivalence of $Z$ and $Z'$. 
     For example, set $Z=(y^5-x^6=0)$ and $Z'=(y^5-x^6-5x^4y^2=0)$. Then by Theorem \ref{T:main2}, $I_0(\alpha Z)=I_0(\alpha Z')$ for any $0<\alpha\le 1$. 
     (Note that $I_0(\alpha Z)=\mathfrak{I}\big{(}(\alpha-\epsilon)Z\big{)}=\so_S^{\ge\alpha}$, where $0<\epsilon\ll 1$.) However, $Z$ and $Z'$ are not analytically isomorphic (see \cite[Chapter V \S 4]{Zar86}).

In \cite{MP16}, \cite{MP18a} and \cite{MP18b}, the authors defined Hodge ideals, 
which contain richer information about the singularity than multiplier ideals. 
It is natural to ask if they can determine a more subtle 
equisingularity equivalence class. Popa  asked the following:
\begin{question}\cite{Pop19}
    Assume that $Z$ and $Z'$ are two germs of irreducible plane curve singularities with the same characteristic sequence.
   Are $Z$ and $Z'$ analytically equivalent if and only if $I_p(\alpha Z)=I_p(\alpha Z')$ for all $p\ge 0$ and 
all $0<\alpha\le 1$, after a possible holomorphic change of coordinates? Is it in fact enough to consider only $p=0,1$?
\end{question}

\begin{table}[h!]
    \begin{center}
      \caption{Multiplier ideals for Example \ref{Ex:1}}
      \label{tab:table2}
      \begin{tabular}{l|r} 
        \textbf{jumping number} & \textbf{multiplier ideal}\\
        $\xi_i$ &  $\mathfrak{I}(\alpha Z),\xi_{i-1}\le \alpha<\xi_i$\\[1ex]
        \hline
        \\[-2ex]
        $\frac{5}{12}$& $\fC\{x,y\}$\\[1ex]
        \hline
        \\[-2ex]
        $\frac{17}{30}$ & $(x,y)$\\[1ex]
        \hline
        \\[-2ex]
        $\frac{19}{30}$ & $(x^2,y)$\\[1ex]
        \hline
        \\[-2ex]
        $\frac{21}{30}$ & $(x,y)^2$\\[1ex]
        \hline
        \\[-2ex]
        $\frac{23}{30}$ & $(x^3,xy,y^2)$\\[1ex]
        \hline
        \\[-2ex]
        $\frac{25}{30}$ & $(x^3,x^2y,y^2)$\\[1ex]
        \hline
        \\[-2ex]
        $\frac{27}{30}$ & $(y^2-x^3,x^2y,x^4,xy^2,y^3)$\\[1ex]
        \hline
        \\[-2ex]
        $\frac{11}{12}$ & $(y^2-x^3-2x^2y,x^4,x^3y, xy^2,y^3)$\\[1ex]
        \hline
        \\[-2ex]
        $\frac{29}{30}$ & $(x^3y,x^4,xy^2,y^3)$\\[1ex]
        \hline
        \\[-2ex]
        $~1$ & $(xy^2-x^4,x^5,x^3y, x^2y^2,y^3)$\\[1ex]
        \hline
      \end{tabular}
    \end{center}
  \end{table}

\end{document}